\documentclass[a4paper,10pt]{article}
\usepackage[a4paper,top=3cm,bottom=2cm,left=4cm,right=4cm,marginparwidth=4cm]{geometry}
\usepackage{graphicx} 

\usepackage{subcaption}
\usepackage[T1]{fontenc}
\usepackage[utf8]{inputenc}
\DeclareUnicodeCharacter{0119}{\k{e}}  
\DeclareUnicodeCharacter{0105}{\k{a}}  
\DeclareUnicodeCharacter{017C}{\.{z}}  
\usepackage[normalem]{ulem}
\usepackage{amsmath}
\usepackage{amsthm}
\usepackage{amssymb}
\usepackage{thmtools}
\usepackage{thm-restate}
\usepackage{mathtools}
\usepackage{bm}
\usepackage{enumitem}
\usepackage{graphicx}
\usepackage{tikz}
\usetikzlibrary{shapes.geometric, positioning, backgrounds, arrows.meta, decorations.pathreplacing, calc}
\tikzset{source/.style={circle,fill=gray!70,draw,minimum size=0.5cm,inner 
sep=0pt}}
\tikzset{non-source/.style={circle,draw,minimum size=0.5cm,inner 
sep=0pt}}
\usepackage{framed}
\usepackage{tabularx}

\usepackage{todonotes}
\usepackage[numbers]{natbib}
\usepackage[ruled, linesnumbered]{algorithm2e}
\usepackage{hyperref}

\DeclareMathOperator{\diam}{diam}

\usepackage{cleveref}

\pgfmathsetmacro{\x}{1.447/sqrt(3)}
\theoremstyle{definition}

\newtheorem{thm}{Theorem}[section]
\newtheorem{lem}[thm]{Lemma}

\newtheorem{cor}[thm]{Corollary}

\newtheorem{prob}[thm]{Problem}

\newtheorem{rrule}{Reduction Rule}{\upshape\itshape}{\upshape\rmfamily}

\Crefname{section}{Section}{Sections}
\Crefname{figure}{Figure}{Figures}
\Crefname{lem}{Lemma}{Lemmas}
\Crefname{thm}{Theorem}{Theorems}
\Crefname{cor}{Corollary}{Corollaries}
\Crefname{obs}{Observation}{Observations}
\Crefname{prop}{property}{properties}
\Crefname{rrule}{Rule}{Rules}

\newcommand{\problemdef}[3]{%
	\setlength{\FrameSep}{3pt}
	\begin{framed}
		\normalsize\textsc{#1} \smallskip \\
		\begin{tabularx}{0.95\textwidth}{@{}l@{\hspace{3pt}}X}
			\normalsize\textbf{Input:}    & \normalsize#2 \\
			\normalsize\textbf{Question:} & \normalsize#3
		\end{tabularx}
	\end{framed}
}
\newcommand{\PROB}[1]{{{\normalfont\textsc{#1}}}\xspace}

\newcommand{\ColorLONG}{\PROB{$k$-Coloring}}

\newcommand{\TC}{\PROB{3-Coloring}}
\newcommand{\LCLONG}{\PROB{List $k$-Coloring}}

\newcommand{\LTC}{\PROB{List $3$-Coloring}}
\newcommand{\LTCD}{\PROB{List $3$-Coloring Diameter-$d$}}
\newcommand{\LTCDth}{\PROB{List $3$-Coloring Diameter-$3$}}
\newcommand{\LTCDtw}{\PROB{List $3$-Coloring Diameter-$2$}}
\newcommand{\LTCDCF}{\PROB{List $3$-Coloring $C_4$-free Diameter-$2$}}

\title{List $3$-coloring $C_4$-free graphs of diameter-$2$ in polynomial-time}
\author{Yukihiro 
Murakami\thanks{\href{mailto:y.murakami@tudelft.nl}{y.murakami@tudelft.nl}}}
\date{%
	Delft Institute of Applied Mathematics, Delft University of Technology, the 
	Netherlands\\[2ex]%
	\today}

\begin{document}

\maketitle

\begin{abstract}
We show that list $3$-coloring a~$C_4$-free graph of diameter-$2$ can be done in polynomial-time.
Our algorithm is based on a structural characterization showing that many such graphs are not~$3$-colorable. 
In particular, we show that~$C_4$-free graphs of diameter-$2$ without universal vertices, where the maximum degree is at least~$17$, are not~$3$-colorable.
\end{abstract}

\section{Introduction}
 
For an integer~$k\ge 1$, we write~$[k] = \{1,2,\ldots, k\}$. 
Given is a finite simple unweighted connected graph $G=(V,E)$.
A map~$c:V\rightarrow [k]$ for an integer~$k$ is called a~\emph{$k$-coloring} if~$c(u)\ne c(v)$ for all edges~$uv\in E$.
In such cases, we say that~$G$ is~\emph{$k$-colorable}.
The smallest~$k$ for which~$G$ admits a~$k$-coloring is called the \emph{chromatic number} of~$G$.

\problemdef{\ColorLONG}{
	A graph~$G=(V,E)$.
}{
	Is $G$ $k$-colorable?
}

One can also consider a more general variant called \emph{list $k$-coloring}.
Each vertex~$v$ is given a list~$L(v)\subseteq [k]$ of colors it may have; if there exists a~$k$-coloring~$c:V\rightarrow [k]$ where~$c(v)\in L(v)$, then we say that the graph is~\emph{list $k$-colorable}.
Note that~list $k$-colorable graphs are necessarily~$k$-colorable, but not vice versa. 

\problemdef{\LCLONG}{
	A graph~$G=(V,E)$ and a list~$L:V\rightarrow 2^{[k]}$.
}{
	Is~$G$ list~$k$-colorable?
}

\textsc{2-Coloring} is equivalent to showing that a graph is bipartite. $\ColorLONG$, and therefore $\LCLONG$, on the other hand, is NP-complete for all~$k\ge3$~\cite{Lovasz1973}.
Since, much attention has been devoted to solving the problems on restricted graphs, such as those with forbidden (induced) subgraphs~\cite{kral2001complexity,chudnovsky2024four}, those on bounded diameter \cite{mertzios2016algorithms,martin2019colouring}, or a combination thereof~\cite{martin2022colouring,klimovsova20233}.
There are numerous thorough surveys on the topic \cite{tuza1997graph,randerath2004vertex,golovach2017survey}.
We study list $3$-coloring on bounded diameter.

\paragraph{Bounded diameter.}

\problemdef{\LTCD}{
	A graph~$G=(V,E)$ of diameter-$d$ and a list~$L:V\rightarrow 2^{[3]}$.
}{
	Is~$G$ list~$3$-colorable?
}

Mertzios and Spirakis showed that $\TC$ remains NP-complete when the input is restricted to graphs of diameter-$3$ (and even triangle-free)~\cite{mertzios2016algorithms}.
In the same paper, they showed that under the Exponential Time Hypothesis, the problem cannot be solved in~$2^{o(\sqrt{n})}$ time; they gave a subexponential-time algorithm to solve $\LTCDth$~\cite{mertzios2016algorithms}.
This was subsequently improved by D\k{e}bski, Piecyk, and Rz\k{a}\.{z}ewski to run in $2^{O((n\log n)^{2/3})}$ time~\cite{debski2022faster} and later by Groenland, Koerts, and Spirkl to $2^{O(n^{2/3-\epsilon})}$ for any~$\epsilon<1/33$~\cite{groenland2026faster}.

For graphs of diameter-$2$, the computational complexity of both~$\TC$ and~$\LTC$ remains open.
Mertzios and Spirakis gave a~$2^{O(\sqrt{n\log n})}$ time algorithm based on dominating sets~\cite{mertzios2016algorithms}. The fastest algorithm thus far runs in $2^{O(n^{1/3}\log^2n)}$ time due to D\k{e}bski, Piecyk, and Rz\k{a}\.{z}ewski~\cite{debski2022faster}.

To better understand the boundary between the intractable (potentially NP-hard) and the tractable (polynomial-time solvable), many have investigated \LTCDtw in combination with forbidden subgraphs.
One of particular interest is in considering \emph{induced $(H_1,H_2,\ldots, H_p)$-free} graphs, which are graphs that do not contain an induced subgraph isomorphic to~$H_i$, for all~$i\in [p]$.
A similar notion is an~\emph{$H$-free} graph, which does not contain a subgraph isomorphic to~$H$.
It has been shown that~$\LTCDtw$ is polynomial-time solvable for
\begin{itemize}
    \item graphs in which \emph{centers of induced claws}\footnote{A claw is the graph~$K_{1,3}$. A center of a claw is the vertex with degree-$3$. A graph is quasi-claw-free if for any two vertices~$x,y$ which are distance-$2$ apart, there exists a common neighbour~$z$ such that all neighbours of~$z$ (excluding~$x,y$) are neighbours of~$x$ or~$y$.}  form an independent set and \emph{quasi-claw-free} graphs~\cite{martin2022colouringGirth}.
    \item induced $C_s$-free graphs for~$s=5,6$~\cite{martin2022colouring}.
    \item induced $(C_4, C_s)$-free graphs for~$s\ge 3$ (shown for~$s\in\{3,5,6,7,8,9\}$ in \cite{martin2022colouring}, generalized to other~$s$ values in~\cite{klimovsova20233}).
    \item induced $(C_3,C_7)$-free graphs \cite{klimovsova20233}.
\end{itemize} 
Most of these results follow a similar recipe.
Color a subset of the vertices and propagate the color constraints, i.e. for uncolored vertices, remove all colors which appear on their neighbours from the list.
Check that the remaining vertices must each have a list of size at most~$2$; as \textsc{2-List-Coloring}\footnote{Note that this is different from \textsc{List $2$-Coloring}. Here, lists do not have to be subsets of~$\{1,2\}$, but they are subsets of size $2$.} can be solved in polynomial-time~\cite{edwards1986complexity}, it remains to show that finding the first suitable pre-coloring takes polynomial-time.

\paragraph{Our contribution.}
 
We take a structural approach and show that most $C_4$-free graphs of diameter-$2$ are not~$3$-colorable.

\problemdef{\LTCDCF}{
	A $C_4$-free graph~$G=(V,E)$ of diameter-$2$ and a list~$L$.
}{
	Is~$G$ list~$3$-colorable?
}

\begin{restatable}[]{thm}{LTCDtwfour}\label{thm:main}
	\LTCDCF is polynomial-time solvable.
\end{restatable}

To show \Cref{thm:main}, we show that most~$C_4$-free graphs of diameter-$2$ are not~$3$-colorable if the maximum degree is at least~$17$.
Determining the maximum degree of a graph can be done in polynomial-time.
The list $3$-colorability of the remaining finitely many graphs can be hardcoded into a lookup table.  

We give a quick overview of the proof.
We show that the $C_4$-free and diameter-$2$ conditions impose a rigid structure on the graph.
We start by fixing a vertex~$v_0$ of maximum degree~$\Delta$.
Letting~$N_i$ denote vertices which are distance-$i$ away from~$v_0$, assuming~$N_2\ne \emptyset$, we show that vertices in~$N_2$ can be partitioned based on their unique adjacent vertex in~$N_1$.
There are~$\Delta$ parts in the partition; we show that each part contains the same number of elements, which is either~$\Delta-2$ or~$\Delta-1$.
We also show that every vertex must be of  degree~$\Delta-1$ or~$\Delta$, and that between every pair of parts in~$N_2$ which are `adjacent', the vertices therein form a perfect matching.
We show that every vertex in~$N_2$ is of one of three \emph{types}.
We then exhaustively check that most graphs containing all possible Types of~$N_2$ vertices are not~$3$-colorable.
This is done by showing that there are too many triangles which force an unachievable number of vertices of a certain color.

While this is useful in showing a result in the  \textsc{Coloring} context, we envision that the improved structural understanding of the class of~$C_4$-free graphs of diameter-$2$ can be of use in other related areas.

\paragraph{Structure.} In \Cref{sec:Preliminaries}, we detail necessary definitions and preliminaries.
In \Cref{sec:Characterization}, we give structural results for $C_4$-free graphs of diameter-$2$.
In \Cref{sec:Color}, we prove necessary conditions for a proper~$3$-coloring to exist for $C_4$-free graphs of diameter-$2$.
In \Cref{sec:ColorTypes}, we show that most graphs in the class are not~$3$-colorable.
In \Cref{sec:Proof}, we prove \Cref{thm:main}.
We give concluding remarks in \Cref{sec:Discussion}.

\section{Preliminaries}\label{sec:Preliminaries}

Let~$G=(V,E)$ be a graph.
Let~$u,v\in V$.
We say that~$u$ \emph{covers~$v$ via adjacency} if~$uv\in E$.
We say that a vertex~$u$ \emph{covers~$v$ via a proxy~$w$} (or just \emph{covers~$v$ via proxy} if the context is clear) if there exists a vertex~$w$ such that~$uw,wv\in E$.
The \emph{degree} of~$u$, denoted~$\deg_G(u)$, is the number of edges incident with~$u$.
We remove the subscript and write~$\deg(u)$ if the reference graph is clear.
The \emph{minimum degree} of~$G$ is~$\delta(G) = \min_{v} \deg(v)$; the \emph{maximum degree} of~$G$ is~$\Delta(G) = \max_{v}\deg(v)$.

We consider unweighted graphs.
The \emph{length} of a path is the number of edges it contains.
The \emph{distance} between two vertices is the number of edges in a shortest path between them.
The \emph{diameter} of~$G$, written~$\diam(G)$ is the maximum distance between any pair of its vertices.
Note that in a graph of diameter-$2$, $u$ covers~$v$ via adjacency or via proxy, or both.

We write~$N^G_i(v) = \{u: d(u,v) = i\}$ to denote the set of vertices which are distance~$i$ away from~$v$.
We drop the superscript if there is no ambiguity in the reference graph.
Let~$u,v\in V$.
We \emph{merge}~$u$ and~$v$ by replacing them with a new vertex~$w$, where~$N_1(w) = N_1(u)\cup N_1(v)$.

Let~$S\subseteq V$. 
The \emph{induced subgraph} $G[S]$ is the graph whose vertex set is~$S$ and whose edge set consists of all elements in~$E$ with both endpoints in~$S$.
We say that~$G$ is~\emph{induced $(H_1, H_2,\ldots, H_p)$-free} if~$G$ does not contain an induced subgraph isomorphic to~$H_i$ for all~$i\in[p]$.
We say that~$G$ is \emph{$H$-free} if~$G$ does not contain a subgraph that is isomorphic to~$H$.

In this paper, we consider~$C_4$-free graphs. 
These are induced $(C_4,D_4,K_4)$-free graphs, where~$K_4$ is the \emph{complete graph} on~$4$ vertices and~$D_4$ is a \emph{diamond graph}, a graph on four vertices and five edges (so a~$K_4$ without an edge).
Since graphs containing $K_4$ as a subgraph are not~$3$-colorable, and since identifying a~$K_4$ takes polynomial-time, one can also restrict to induced~$(C_4,D_4)$-graphs.
Nevertheless, for ease of notation, we formulate our results for~$C_4$-free graphs.
Note that there is a so-called \emph{diamond elimination} rule  \cite{mertzios2016algorithms}.
It is not immediately clear if this rule preserves induced~$C_4$-freeness. 
Thus, we have decided to also exclude~$D_4$ as an induced subgraph.
See \Cref{sec:Discussion} for a discussion on the matter.



The following lemma is in the spirit of Observation 1 in \cite{mertzios2016algorithms}.

\begin{lem}\label{lem:Irreducible}
    Let~$G=(V,E)$ be a~$C_4$-free graph.
    If~$G$ has more than two vertices, then for every~$u\in V$, the maximum degree of~$G[N_1(u)]$ is~$1$.
\end{lem}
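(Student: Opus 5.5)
The plan is to split the claim into an upper bound and an attainment part, proving the upper bound directly from $C_4$-freeness.

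\textbf{Upper bound.} Fix $u\in V$ and suppose some $w\in N_1(u)$ has two distinct neighbours $x,y\in N_1(u)$. Then $u,x,w,y$ are four distinct vertices, and $ux, xw, wy, yu$ are all edges of $G$. Hence $u\,x\,w\,y\,u$ is a (not necessarily induced) $C_4$ subgraph, contradicting $C_4$-freeness. So every vertex of $G[N_1(u)]$ has degree at most $1$, and $G[N_1(u)]$ is a matching together with isolated vertices. This is equivalent to the remark above that $G$ has no induced $C_4$, $D_4$ or $K_4$: each of these would give a vertex of degree $2$ inside some neighbourhood.

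\textbf{Attainment.} To show that the maximum degree is exactly $1$, I need an edge $xw$ inside $N_1(u)$, i.e.\ a triangle through $u$. This is where the hypothesis ``more than two vertices'' has to enter. Taken alone it does not suffice. In the star $K_{1,3}$, which is $C_4$-free, connected and of diameter $2$, the centre's neighbourhood is independent, so $G[N_1(u)]$ has maximum degree $0$.

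I will therefore read the lemma in the spirit of Observation~1 of \cite{mertzios2016algorithms}, namely as a statement about graphs to which the obvious reductions for colouring no longer apply. In that setting, the plan is as follows.
\begin{itemize}
\item If $u$ lies in no triangle, then $N_1(u)$ is independent and the edges at $u$ impose only pairwise constraints.
\item In a diameter-$2$ graph with more than two vertices, such a $u$ is handled separately. For example, a degree-$\le 2$ vertex can always be coloured last in the non-list case. Likewise, a vertex whose neighbourhood is independent can be treated via the list propagation already described in the introduction.
\item In the remaining case $u$ lies in a triangle, which gives an edge inside $N_1(u)$ and hence a vertex of degree exactly $1$ in $G[N_1(u)]$.
\end{itemize}

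The main obstacle is precisely this lower-bound direction. As literally stated, without such a standing reduction assumption, it fails by the $K_{1,3}$ example. The substantive content used later in the paper is the bound ``degree at most $1$'' proved above. If no reduction assumption is intended, I would record the lemma as ``the maximum degree of $G[N_1(u)]$ is at most $1$''.
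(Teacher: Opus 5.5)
Your upper-bound argument is exactly the paper's proof (a vertex $y$ of degree $2$ in $G[N_1(u)]$ with neighbours $x,z$ gives the $C_4$ $uxyz$). ``At most $1$'' is all the paper establishes or later uses, e.g.\ in \Cref{lem:N1Neighbour,lem:V_iAdjacency,lem:1of3Types}. You are right that ``exactly $1$'' fails literally (e.g.\ $K_{1,3}$ or $C_5$; the hypothesis of more than two vertices plays no role). So read the lemma as ``at most $1$'' and drop the speculative reduction-based attainment plan: it proves nothing, and it clashes with the paper applying the lemma to triangle-free graphs such as those in \Cref{lem:T1Delta-1}.
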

\begin{proof}
    If there were a vertex~$y$ in $G[N_1(u)]$ of degree-$2$ with neighbours~$x$ and~$z$, then~$uxyz$ is a~$C_4$, which is prohibited.
\end{proof}

\section{Characterizing \texorpdfstring{$C_4$}{C4}-free graphs of diameter-\texorpdfstring{$2$}{2}}\label{sec:Characterization}

We introduce notation which will be used in the rest of this paper.
We write~$G=(V,E)$ to mean a $C_4$-free graph of diameter-$2$.
We let~$\Delta$ denote the maximum degree of~$G$, and let~$v_0$ be a vertex with degree~$\Delta$.
We call~$v_0$ a \emph{root}.
If~$\Delta=2$, then~$G$ must be isomorphic to~$C_5$, which is~$3$-colorable. 
We assume~$\Delta\ge 3$.
Let~$N_1 = N_1(v) = \{x_1,x_2,\ldots, x_\Delta\}$ be the set of neighbours of~$v_0$, and let~$N_2 = N_2(v)$.

A vertex that is adjacent to every other vertex is called a \emph{universal vertex}.
\LTCDCF is polynomial-time solvable if there exists a universal vertex.
\begin{lem}\label{lem:N2Empty}
    If~$N_2=\emptyset$, then \LTCDCF is polynomial-time solvable.
\end{lem}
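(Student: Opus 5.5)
If $N_2=\emptyset$, then every vertex other than $v_0$ lies in $N_1$, so $v_0$ is a universal vertex. The plan is to use this to pin down the structure of $G$ completely and then solve the list-coloring problem by a direct reduction.

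\textbf{Structure.} Apply \Cref{lem:Irreducible} to $u=v_0$: the graph $G[N_1]$ has maximum degree at most $1$. Hence $G[N_1]$ is a matching $M$ together with isolated vertices, and $G$ is a ``friendship-like'' graph, namely triangles sharing $v_0$ plus pendant vertices attached to $v_0$. This description can be read off in linear time.

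\textbf{Coloring.} In any proper coloring, $v_0$ receives some color $c\in L(v_0)$, and every vertex of $N_1$ must avoid $c$. I will try each of the at most three choices of $c$. For a fixed $c$, the remaining problem splits into independent pieces, one per component of $G[N_1]$, each with lists $L'(x)=L(x)\setminus\{c\}$.
\begin{itemize}
\item An isolated vertex $x$ is colorable iff $L'(x)\neq\emptyset$.
\item A matching edge $xy$ is colorable iff $L'(x)$ and $L'(y)$ are both nonempty and it is not the case that $L'(x)=L'(y)$ with $|L'(x)|=1$.
\end{itemize}
Each check takes constant time, so the whole procedure runs in linear time. The graph is list $3$-colorable iff some choice of $c$ makes every piece colorable.

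This is essentially routine. The only step needing care is the use of \Cref{lem:Irreducible}: it requires more than two vertices, and the small cases are trivial. With that in hand, the structure collapses to independent components of size at most $2$, and the main obstacle is simply stating this cleanly rather than any real difficulty.
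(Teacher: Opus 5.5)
Your proof is correct, and its first step matches the paper's: guess the color of the universal vertex $v_0$ from the at most three colors in $L(v_0)$, then delete that color from the lists on $N_1$. The two arguments differ in how they solve the residual instance.

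The paper observes that every remaining list has size at most $2$ and invokes the polynomial-time algorithm for 2-list-coloring (Edwards; essentially 2-SAT). That argument never uses $C_4$-freeness, so it works for any graph with a universal vertex.

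You instead use \Cref{lem:Irreducible} to see that $G[N_1]$ is a matching plus isolated vertices. The residual instance therefore splits into components of at most two vertices, each checked in constant time, and your criterion for a matching edge $xy$ is correct. Your version is self-contained and gives a linear-time algorithm without an external citation, at the cost of depending on the $C_4$-free structure. The paper's is shorter and more general.

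Your caveat about \Cref{lem:Irreducible} requiring more than two vertices is harmless, since the paper assumes $\Delta\ge 3$ at that point.
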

\begin{proof}
    If there exists a proper list $3$-coloring, there exists one where~$v_0$ is given one of at most three colors in~$L(v_0)$.
    Each guess returns an instance of list-$2$ coloring, which can be solved in polynomial-time~\cite{edwards1986complexity}.
\end{proof}
So we assume henceforth that~$N_2$ is non-empty.
See \Cref{fig:PotentialYESinstances} for all~$C_4$-free graphs of diameter-$2$ when~$\Delta\le 3$.

\begin{figure}
    \centering
    \begin{tikzpicture}[
    every node/.style={circle, fill=black, inner sep=1.8pt},
    scale=1.2, 
    >=stealth
]


    \begin{scope}[shift={(0, 0.647)}]
        \foreach \i in {1,...,5} {
            \node (g1-\i) at ({90 - (\i-1)*72}:0.8) {};
        }
        \draw (g1-1) -- (g1-2) -- (g1-3) -- (g1-4) -- (g1-5) -- (g1-1);
    \end{scope}

    \begin{scope}[shift={(2, 0.447)}]
        \node (g2-top) at (0,1.0) {};
        \node (g2-center) at (0,0.0353) {};
        \node (g2-sub-top) at (0,0.51765) {};
        \node (g2-sub-left) at (-0.4178,-0.20585) {};
        \node (g2-sub-right) at (0.4178,-0.20585) {};
        \node (g2-left-bot) at (-\x,-0.447) {};
        \node (g2-right-bot) at (\x,-0.447) {};
        
        
        \draw (g2-top) -- (g2-left-bot) -- (g2-right-bot) -- (g2-top);
        \draw (g2-top) -- (g2-center);
        \draw (g2-left-bot) -- (g2-center);
        \draw (g2-right-bot) -- (g2-center);
    \end{scope}

    \begin{scope}[shift={(4, 0.647)}]
        \foreach \i in {1,...,5} {
            \node (out-\i) at ({90 - (\i-1)*72}:0.8) {};
        }
        \draw (out-1) -- (out-2) -- (out-3) -- (out-4) -- (out-5) -- (out-1);
        
        \foreach \i in {1,...,5} {
            \node (in-\i) at ({90 - (\i-1)*72}:0.32) {};
        }
        \draw (in-1) -- (in-3) -- (in-5) -- (in-2) -- (in-4) -- (in-1);
        
        \foreach \i in {1,...,5} {
            \draw (out-\i) -- (in-\i);
        }
    \end{scope}

    \begin{scope}[shift={(6, 0.447)}]
        \node (g2-top) at (0,1.0) {};
        \node (g2-sub-left) at (-0.6,-0.447) {};
        \node (g2-sub-right) at (0.6,-0.447) {};
        \node (g2-bot) at (0,-0.447) {};
        
        
        \draw (g2-top) -- (g2-bot);
        \draw (g2-top) -- (g2-sub-left);
        \draw (g2-top) -- (g2-sub-right);
        \draw (g2-bot) -- (g2-sub-left);
    \end{scope}

    \begin{scope}[shift={(8, 0.447)}]
        \node (g2-top) at (0,1.0) {};
        \node (g2-sub-left) at (-0.6,-0.447) {};
        \node (g2-sub-right) at (0.6,-0.447) {};
        \node (g2-bot) at (0,-0.447) {};
        
        
        \draw (g2-top) -- (g2-bot);
        \draw (g2-top) -- (g2-sub-left);
        \draw (g2-top) -- (g2-sub-right);
    \end{scope}

\end{tikzpicture}
    \caption{All $C_4$-free graphs of diameter-$2$ when~$\Delta\le 3$.
    Note that they are $3$-colorable, which means they are potentially list~$3$-colorable.
    The three graphs on the left do not have universal vertices; the two graphs on the right do have universal vertices.}
    \label{fig:PotentialYESinstances}
\end{figure}




\subsection{Overview of results}
We give an overview of results in \Cref{sec:Characterization}. 
By taking a vertex~$v_0$ with $\deg(v_0) = \Delta$ as root, we show that~$N_2= N_2(v_0)$ has a canonical partition~$N_2 = \bigcup_{i=1}^\Delta V_i,$ where~$V_i\cap V_j = \emptyset$ for all~$i\ne j$,~$|V_1| = |V_2| = \cdots = |V_\Delta|$ (\Cref{lem:V_iUniformSize}), and $|V_i|\in \{\Delta-2,\Delta -1\}$ (\Cref{lem:V_iSizeLowerBound}).
Every vertex in~$N_2$ is of Types-$1$, $2$, or~$3$ (see \Cref{subsec:Type}), and these vertices are of degrees~$\Delta,\Delta-1$, and~$\Delta$, respectively (\Cref{lem:T1Delta,lem:T2Delta-1,lem:T3Delta}). 
If~$N_2$ contains a Type-$2$ or a Type-$3$ vertex, then~$|V_i| = \Delta-2$; if~$|V_i| = \Delta-1$, then~$N_2$ contains only Type-$1$ vertices (\Cref{lem:adj=Delta-2}).
If~$N_2$ contains a Type-$2$ vertex, then a different rooting gives a canonical partition that contains a Type-$1$ vertex (\Cref{lem:RootShift}).
This reduces the number of cases we need to consider in \Cref{sec:ColorTypes}.
Lastly, there can only be~$C_3$'s in~$G[N_2]$ between two adjacent vertices which are both not Type-$2$ (\Cref{lem:TypeTriangles}).
In general, this means there are usually a lot of~$C_3's$ in~$G[N_2]$. 
We use this fact to show non-$3$-colorability results in \Cref{sec:Color,sec:ColorTypes}.

\subsection{A canonical partition of \texorpdfstring{$N_2$}{N2}}\label{subsec:N2Study}

\begin{lem}\label{lem:N2SingleEdge}
    Every vertex in~$N_2$ has exactly one neighbour in~$N_1$.
\end{lem}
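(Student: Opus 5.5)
The statement splits into two claims: every $y\in N_2$ has at least one neighbour in $N_1$, and at most one.

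Existence is immediate. Since $y\in N_2 = N_2(v_0)$, we have $d(v_0,y)=2$. Any shortest path $v_0 w y$ has its middle vertex $w$ adjacent to $v_0$, so $w\in N_1$ and $wy\in E$. No diameter argument is needed beyond the definition of $N_2$.

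For uniqueness, I would argue by contradiction using $C_4$-freeness directly. Suppose $y\in N_2$ has two distinct neighbours $x_i,x_j\in N_1$.
\begin{itemize}
  \item Both are adjacent to $v_0$, and $y\ne v_0$.
  \item $y$ is not adjacent to $v_0$, since $y\notin N_1$.
  \item Hence $v_0 x_i y x_j v_0$ is a closed walk on four distinct vertices $v_0,x_i,y,x_j$ using the edges $v_0x_i$, $x_iy$, $yx_j$, $x_jv_0$.
\end{itemize}
This is a (not necessarily induced) copy of $C_4$ as a subgraph. The paper defines $C_4$-free as forbidding $C_4$ as a subgraph, not merely as an induced subgraph, so a possible chord $x_ix_j$ is irrelevant. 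This contradicts the assumption that $G$ is $C_4$-free.

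There is no real obstacle here. The only point needing care is to use the subgraph (rather than induced) notion of $C_4$-freeness fixed in \Cref{sec:Preliminaries}, together with the distinctness of the four vertices. Note that the argument does not use the maximum degree of $v_0$ or \Cref{lem:Irreducible}; it holds for any root $v_0$.
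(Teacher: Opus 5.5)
Your proof is correct and follows the paper's argument. Existence comes from $d(v_0,y)=2$, which the paper phrases as $y$ covering $v_0$ via a proxy. Uniqueness comes from the $C_4$ $v_0x_iyx_j$ that any two neighbours in $N_1$ would create.
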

\begin{proof}
    Let~$u\in N_2$.
    The vertex~$u$ must cover~$v_0$ via proxy, so it must have at least one neighbour in~$N_1$.
    If there are two such neighbours, say~$x_i,x_j\in N_1$, then~$v_0x_iux_j$ forms a~$C_4$.
\end{proof}

\begin{lem}\label{lem:N1Neighbour}
    Every vertex in~$N_1$ has at least one neighbour in~$N_2$.
\end{lem}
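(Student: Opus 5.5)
Suppose for contradiction that some $x_i \in N_1$ has no neighbour in $N_2$. Then $N(x_i) \subseteq \{v_0\} \cup N_1$. By \Cref{lem:Irreducible} applied to $v_0$, the vertex $x_i$ has at most one neighbour in $N_1$. Hence $N(x_i) \subseteq \{v_0, x_j\}$ for at most one index $j \ne i$.

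The plan is to pick a vertex $u \in N_2$, which exists by our standing assumption that $N_2 \neq \emptyset$. By diameter-$2$, $x_i$ must cover $u$, and I will show it cannot.
\begin{itemize}
\item $x_i$ cannot cover $u$ via adjacency, since $x_i$ has no neighbour in $N_2$.
\item Any proxy $w$ with $x_iw, wu \in E$ would have to be a neighbour of $x_i$, so $w \in \{v_0, x_j\}$.
\item The proxy cannot be $v_0$, since $u \notin N_1$.
\item So the only possibility is $w = x_j$, with $x_i x_j \in E$ and $x_j u \in E$.
\end{itemize}
Thus every vertex of $N_2$ is adjacent to $x_j$. By \Cref{lem:N2SingleEdge}, $x_j$ is then the unique $N_1$-neighbour of every $u \in N_2$.

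Now I use $\Delta \ge 3$ to pick a third neighbour $x_k$ of $v_0$, with $k \notin \{i,j\}$. Consider $x_k$ and $x_i$. They are not adjacent: $x_i$'s only possible $N_1$-neighbour is $x_j$. So they must share a common neighbour. The only candidates are the neighbours of $x_i$, namely $v_0$ and $x_j$. The vertex $v_0$ is a common neighbour, so this pair is fine.

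Instead I look at $x_k$ versus $u \in N_2$. Since $u$'s only $N_1$-neighbour is $x_j$, the vertex $x_k$ is not adjacent to $u$. Hence $x_k$ needs a common neighbour with $u$, which is either $x_j$ or some vertex of $N_2$.
\begin{itemize}
\item If $x_k x_j \in E$, then $x_j$ has two neighbours $x_i, x_k$ in $N_1$, contradicting \Cref{lem:Irreducible}.
\item So $x_k$ must have a neighbour $u' \in N_2$ adjacent to $u$. But $u'$ is adjacent to $x_j$ (every vertex of $N_2$ is), and $u' x_k \in E$ with $k \neq j$. This contradicts \Cref{lem:N2SingleEdge}.
\end{itemize}
Both options fail, which gives the contradiction.

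The main obstacle is this last step: handling the third vertex $x_k$, and in particular the case where $x_j$ has no $N_1$-neighbour at all (when $x_i$ is isolated in $G[N_1]$). That case is easier: then $x_i$ has degree $1$, and $u$ cannot be reached from $x_i$ at all. Care is needed to ensure $\Delta \ge 3$ supplies $x_k$, which the standing assumptions do.
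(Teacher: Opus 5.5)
Your proof is correct and is essentially the paper's argument recast as a contradiction. The paper fixes $u\in N_2$ with $N_1$-neighbour $y$, handles $xy\notin E$ by letting $u$ reach $x$ through a vertex of $N_2$, and for $xy\in E$ takes a third neighbour $z$ of $v_0$ (from $\Delta\ge 3$, with \Cref{lem:Irreducible} giving $zx,zy\notin E$), so that $u$ reaches $z$ through some $v\in N_2$ and $v$ in turn reaches $x$ through a vertex of $N_2$. Your step forcing every vertex of $N_2$ onto $x_j$ and your final check of $x_k$ against $u$ use the same two diameter-$2$ conditions in contrapositive form; the detour comparing $x_k$ with $x_i$ contributes nothing and can be dropped.
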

\begin{proof}
    Let~$x\in N_1$. We wish to show that~$x$ has a neighbour in~$N_2$.
    Let~$u\in N_2$. 
    By \Cref{lem:N2SingleEdge},~$uy\in E$ where~$y\in N_1$.
    If $xy\notin E$, then~$u$ must cover~$x$ via a proxy~$w\in N_2$, i.e.,~$uw,wx\in E$.
    So~$x$ has a neighbour in~$N_2$.

    So suppose~$xy\in E$.
    Since~$\Delta\ge 3$, $v_0$ has a neighbour~$z$ that is not~$x$ nor~$y$.
    By~\Cref{lem:Irreducible},~$G[N_1]$ does not contain a~$P_3$, and so~$zx,zy\notin E$.
    Then~$u$ must cover~$z$ via a proxy~$v\in N_2$, and~$v$ must cover~$x$ via a proxy~$w\in N_2$.
    So~$x$ has a neighbour in~$N_2$.
\end{proof}


By \Cref{lem:N2SingleEdge}, every vertex in~$N_2$ is adjacent to exactly one vertex in~$N_1$.
We partition the vertex set~$N_2$ based on this adjacency; in particular, let~$V_i$ denote the set of vertices in~$N_2$ which are adjacent to~$x_i$, for~$i\in[\Delta]$.
Note that~$|V_i|\ge 1$ for all $i$, by \Cref{lem:N1Neighbour}.
We call such a partition~$V_1, V_2,\ldots, V_\Delta$ a \emph{canonical partition} (with respect to~$v_0$) of~$N_2$.
See \Cref{fig:Example} for an example.


We say that two parts~$V_i, V_j$ are \emph{adjacent} if a vertex of~$V_i$ has a neighbour in~$V_j$.

\begin{lem}\label{lem:x_ix_jNonAdjacent}
    Suppose $x_i$ and~$x_j$ are non-adjacent. Then~$V_i$ and~$V_j$ are adjacent, and the edges between $V_i$ and~$V_j$ form a perfect matching.
    In particular,~$|V_i|=|V_j|$.
\end{lem}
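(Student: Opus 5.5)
Fix $x_i,x_j$ non-adjacent. The plan is to show that every $u\in V_i$ has exactly one neighbour in $V_j$, and symmetrically every $w\in V_j$ has exactly one neighbour in $V_i$. Then the edges between $V_i$ and $V_j$ form a perfect matching, which gives $|V_i|=|V_j|$. Adjacency of the parts then follows from $|V_i|\ge 1$ (\Cref{lem:N1Neighbour}).

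\emph{Existence.} Let $u\in V_i$. Since $\diam(G)=2$ and $ux_j\notin E$ (by \Cref{lem:N2SingleEdge} the only neighbour of $u$ in $N_1$ is $x_i$), $u$ must cover $x_j$ via some proxy $w$. This proxy is not $v_0$, because $u\in N_2$. If $w\in N_1$, then $w=x_i$, since $x_i$ is the unique $N_1$-neighbour of $u$. But then $x_ix_j\in E$, a contradiction. Hence $w\in N_2$ and $wx_j\in E$, so $w\in V_j$ is a neighbour of $u$.

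\emph{Uniqueness.} Suppose $u$ had two neighbours $w,w'\in V_j$. Then $u\,w\,x_j\,w'$ is a $C_4$, since $w,w'\in V_j$ are both adjacent to $x_j$ and $u\ne x_j$. This contradicts $C_4$-freeness. The same argument with the roles of $i$ and $j$ swapped shows that each $w\in V_j$ has exactly one neighbour in $V_i$.

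The only point that needs care is ruling out that the proxy lies in $N_1$ or equals $v_0$. This is handled directly by \Cref{lem:N2SingleEdge} together with the non-adjacency of $x_i$ and $x_j$, so I expect no real obstacle.
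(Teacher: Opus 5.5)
Your proof is correct and takes essentially the same route as the paper. Both arguments use the diameter-$2$ condition to force every vertex of $V_i$ (resp.\ $V_j$) to reach $x_j$ (resp.\ $x_i$) through a proxy in the other part, and then use $C_4$-freeness through $x_i$ or $x_j$ to rule out repeated neighbours. The paper phrases the second step as ``no two vertices of $V_i$ share a neighbour in $V_j$'', which gives injections in both directions, whereas you directly cap each vertex's number of neighbours in the other part at one. Your version makes the perfect-matching conclusion explicit, and also the exclusion of $v_0$ and $x_i$ as proxies, which the paper leaves implicit.
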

\begin{proof}
    As~$x_ix_j\notin E$, every vertex in~$V_i$ must cover~$x_j$ via a proxy in~$V_j$. 
    Similarly, every vertex in~$V_j$ must cover~$x_i$ via a proxy in~$V_i$.
    Two vertices in~$V_i$ cannot have a common neighbour apart from~$x_i$, as that would result in a~$C_4$.
    So~$|V_i|\le |V_j|$. Similarly,~$|V_j|\le |V_i|$.
    We obtain our perfect matching and the claim follows. 
\end{proof}

Note also that if~$V_i$ and~$V_j$ are non-adjacent, then~$x_i$ and~$x_j$ must be adjacent.


\begin{lem}\label{lem:V_iAdjacency}
    $V_i$ is adjacent to at least~$\Delta-2$ parts for all~$i\in [\Delta]$.
\end{lem}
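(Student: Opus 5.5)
We derive this directly from \Cref{lem:Irreducible} and \Cref{lem:x_ix_jNonAdjacent}. Fix~$i\in[\Delta]$. By the remark after \Cref{lem:x_ix_jNonAdjacent}, if~$V_j$ ($j\ne i$) is not adjacent to~$V_i$, then~$x_ix_j\in E$. Equivalently, by \Cref{lem:x_ix_jNonAdjacent}, every~$j\ne i$ with~$x_ix_j\notin E$ gives a part~$V_j$ adjacent to~$V_i$. So it suffices to bound how many~$x_j$ can be adjacent to~$x_i$ inside~$N_1$.

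By \Cref{lem:Irreducible} applied to~$u=v_0$, the graph~$G[N_1]$ has maximum degree at most~$1$. Hence~$x_i$ has at most one neighbour~$x_j$ in~$N_1$. Consequently, among the~$\Delta-1$ indices~$j\ne i$, at least~$\Delta-2$ satisfy~$x_ix_j\notin E$, and for each of these~$V_i$ is adjacent to~$V_j$ (with a perfect matching between them). This gives that~$V_i$ is adjacent to at least~$\Delta-2$ parts.

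There is no real obstacle. The only point to check is that \Cref{lem:Irreducible} applies, which needs~$G$ to have more than two vertices; this holds since~$\Delta\ge 3$. Also, adjacency of parts is only counted over~$j\ne i$, so~$V_i$ itself is never counted. If one wants to count~$V_i$ as potentially adjacent to itself, the bound only improves.
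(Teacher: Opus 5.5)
Your proof is correct and is the paper's argument: by \Cref{lem:Irreducible}, $x_i$ has at most one neighbour in $N_1$, and \Cref{lem:x_ix_jNonAdjacent} makes $V_j$ adjacent to $V_i$ for each of the at least $\Delta-2$ indices $j\ne i$ with $x_ix_j\notin E$. Your remark that \Cref{lem:Irreducible} applies because $\Delta\ge 3$ is a small point the paper leaves implicit.
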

\begin{proof}
    By \Cref{lem:Irreducible},~$x_i$ has at most one neighbour in~$N_1$, and so~$x_i$ is non-adjacent to at least~$\Delta-2$ vertices in~$N_1$.
    By \Cref{lem:x_ix_jNonAdjacent},~$V_i$ is adjacent to at least~$\Delta-2$ parts. 
\end{proof}

\begin{lem}\label{lem:V_iUniformSize}
    The canonical partition is balanced, i.e., $|V_1|=|V_2|=\cdots = |V_\Delta|$.
\end{lem}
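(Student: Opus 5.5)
The plan is to use \Cref{lem:x_ix_jNonAdjacent} as the main tool. It says that whenever $x_i x_j\notin E$, the sizes satisfy $|V_i|=|V_j|$. So consider the auxiliary graph $H$ on vertex set $[\Delta]$, where $i$ and $j$ are joined exactly when $x_i x_j\notin E$. In other words, $H$ is the complement of $G[N_1]$, restricted to the indices.

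Size equality propagates along edges of $H$. Hence it suffices to show that $H$ is connected: then every part has the same size as $V_1$, and we are done.

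To show connectivity, recall from \Cref{lem:Irreducible} that $G[N_1]$ has maximum degree at most $1$, so $G[N_1]$ is a matching. Consequently, every vertex of $H$ has degree at least $\Delta-2$ in $H$.

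Take any two indices $i\ne j$. If $x_ix_j\notin E$, they are adjacent in $H$ and there is nothing to show. Otherwise $x_ix_j$ is a matching edge. Since $\Delta\ge 3$, there is a third index $k$. Because $G[N_1]$ is a matching and $x_i$ is already matched to $x_j$, $x_k$ is adjacent to neither $x_i$ nor $x_j$. This gives a path $i,k,j$ in $H$, and therefore $|V_i|=|V_k|=|V_j|$.

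The main obstacle is the case where $x_i$ and $x_j$ are adjacent, since \Cref{lem:x_ix_jNonAdjacent} does not apply to them directly. This is handled by routing through a third index $k$, using that $\Delta\ge3$ and that $G[N_1]$ is a matching. All other steps are immediate applications of the earlier lemmas.
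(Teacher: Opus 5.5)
Your proof is correct and is essentially the paper's argument. For non-adjacent $x_i,x_j$ you apply \Cref{lem:x_ix_jNonAdjacent} directly. For an adjacent pair you route through a third index $k$, which exists since $\Delta\ge 3$, and \Cref{lem:Irreducible} ensures $x_k$ is adjacent to neither $x_i$ nor $x_j$. The auxiliary complement graph $H$ just repackages this two-step argument.
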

\begin{proof}
    Let~$x_i\in N_1$.
    If~$x_i$ is not adjacent to any vertices in~$N_1$, then we are done by~\Cref{lem:x_ix_jNonAdjacent}.
    So suppose~$x_ix_j\in E$ for some~$x_j\in N_1$. 
    By~\Cref{lem:Irreducible},~$x_i$ and~$x_j$ have no other neighbours in~$N_1$.
    By invoking~\Cref{lem:x_ix_jNonAdjacent} twice, we obtain~$|V_i|=|V_k|=|V_j|$ for each~$k\in [\Delta]\setminus\{i,j\}$.
    Such a set~$[\Delta]\setminus\{i,j\}$ is non-empty as~$\Delta\ge 3$, which gives the required result.
\end{proof}

\begin{lem}\label{lem:V_iSizeLowerBound}
    $|V_i|\in\{\Delta-2, \Delta-1\}$ for all~$i \in [\Delta]$.
\end{lem}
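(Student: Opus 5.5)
We prove the two inequalities $|V_i|\ge \Delta-2$ and $|V_i|\le \Delta-1$ separately, using the structure already established. By \Cref{lem:V_iUniformSize} all parts have a common size, say $m$, so it suffices to bound $m$ from both sides.

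\emph{Upper bound.} Consider $x_i$. Its neighbours are $v_0$, all of $V_i$, and at most one vertex of $N_1$ (\Cref{lem:Irreducible}). Since $\deg(x_i)\le\Delta$, we get $1+m\le \Delta$, i.e.\ $m\le \Delta-1$. This step is immediate.

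\emph{Lower bound.} Pick any $u\in V_i$ and count its neighbours. By \Cref{lem:N2SingleEdge}, $u$ has exactly one neighbour in $N_1$, namely $x_i$. By \Cref{lem:V_iAdjacency}, there are at least $\Delta-2$ indices $j$ with $x_ix_j\notin E$. For each such $j$, \Cref{lem:x_ix_jNonAdjacent} says the edges between $V_i$ and $V_j$ form a perfect matching, so $u$ has exactly one neighbour in $V_j$. Hence $\deg(u)\ge 1+(\Delta-2)=\Delta-1$. This bounds $\deg(u)$, not $m$, so we need a second counting argument. We count vertices at distance $2$ from a suitable vertex instead.

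\emph{Counting around $x_i$.} Each $x_j$ with $x_ix_j\notin E$ must be covered by $x_i$ via a proxy. Such a proxy is either $v_0$ (always available) or a vertex of $V_i$; that alone gives nothing. So take instead a vertex $w\in V_j$ for such a $j$. It must be covered by $x_i$ via a proxy, and the only possible common neighbours lie in $V_i$. Distinct vertices $w,w'\in V_j$ must have distinct proxies in $V_i$, by the perfect matching. This again only yields $|V_i|=|V_j|$.

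The real constraint should come from a vertex $u\in V_i$ covering the other vertices of $V_i$. Two distinct vertices $u,u'\in V_i$ share the neighbour $x_i$. If $uu'\notin E$, they must share another common neighbour, and then $x_i,u,\cdot,u'$ forms a $C_4$. So $V_i$ is a clique, and by \Cref{lem:Irreducible} (applied to $N_1(x_i)$) it has size at most $2$. This caps $m$ from above, not below, which suggests the lower bound must come from degree considerations at $v_0$ or $x_i$ instead.

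\emph{Lower bound via $\Delta$-vertex.} Rerun the whole setup rooted at $u\in V_i$ instead, and compare: every vertex of $G$ lies within distance $2$ of $u$. Counting paths of length at most $2$ from $u$ gives
\[
n\le 1+\deg(u)+\sum_{w\sim u}(\deg(w)-1),
\]
while $n=1+\Delta+\Delta m$. Since $G$ is $C_4$-free, all these paths reach distinct vertices, apart from the overlaps within triangles. Bounding the degrees by $\Delta$ and comparing with $n$ should force $m\ge\Delta-2$. I expect this comparison to be the main obstacle: we have to account carefully for triangles at $u$, of which there is at most a matching in $N_1(u)$, and for the vertex $v_0$ reached via $x_i$, so that the inequality is tight enough to give exactly $\Delta-2$.
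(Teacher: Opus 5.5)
Your upper bound $m\le\Delta-1$ is correct and matches the paper. The lower bound $m\ge\Delta-2$ is never established, and both routes you try for it fail.

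\emph{The clique claim is false.} Two vertices $u,u'\in V_i$ already share the neighbour $x_i$, so $d(u,u')\le 2$ and diameter-$2$ requires nothing more. A second common neighbour would in fact close a $C_4$. For example, in the Petersen graph each $V_i$ consists of two non-adjacent vertices.

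\emph{The final path count points the wrong way.} Bounding degrees from above by $\Delta$ gives $1+\Delta+\Delta m=n\le 1+\Delta^2$, i.e.\ $m\le\Delta-1$ again; this is just the Moore bound. A lower bound on $m$ needs lower bounds on degrees. Use the exact count that $C_4$-freeness gives (each vertex at distance $2$ from $u$ is reached by exactly one path of length $2$), together with $\deg(w)\ge\Delta-1$ for $w\in N_2$, $\deg(x_i)\ge m+1$, and at most $\lfloor\deg(u)/2\rfloor$ edges inside $N_1(u)$. Even then you only get $(\Delta-1)m\ge(\Delta-1)(\Delta-4)$, i.e.\ $m\ge\Delta-4$. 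A degree count around a single vertex is too lossy here.

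The missing idea is to push your observation one step further: $u\in V_i$ has exactly one neighbour in each part $V_k$ with $x_ix_k\notin E$, and you should follow those neighbours into a \emph{fixed} target part $V_j$.
\begin{itemize}
\item If some part is non-adjacent to $V_i$, choose it as $V_j$. Then $x_ix_j\in E$, so by \Cref{lem:Irreducible} every other $x_k$ is non-adjacent to both $x_i$ and $x_j$.
\item If no two parts are non-adjacent, then $N_1$ is independent: an edge $x_ix_l$ plus an edge $ww'$ with $w\in V_i$, $w'\in V_l$ would give the $C_4$ $x_i w w' x_l$. Any $j\ne i$ will do.
\end{itemize}
In both cases, for each of the $\Delta-2$ indices $k\notin\{i,j\}$, \Cref{lem:x_ix_jNonAdjacent} supplies a neighbour $w_k\in V_k$ of $u$ and a neighbour $y_k\in V_j$ of $w_k$. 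If $y_k=y_{k'}$ with $k\ne k'$, then $u\,w_k\,y_k\,w_{k'}$ is a $C_4$. So the $y_k$ are pairwise distinct and $m=|V_j|\ge\Delta-2$. This is exactly the pigeonhole argument in the paper.
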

\begin{proof}
    For the upper bound, observe that~$\Delta \ge\deg(x_i)\ge |V_i|+1$, where the $+1$ term comes from the edge~$x_iv_0$. 
    We now show the lower bound (see \Cref{fig:V_iSizeLowerBound} for an illustration).
    If there are two non-adjacent parts, select these as~$V_i$ and~$V_j$.
    Otherwise, select any pair of parts.
    Let~$v\in V_i$.
    We study how vertices in~$V_j$ are covered by~$v$ via proxies.
    By \Cref{lem:x_ix_jNonAdjacent} and \Cref{lem:V_iAdjacency},~$v$ is adjacent to a vertex in every part that is not~$V_j$, so~$v$ is adjacent to a vertex in at least~$\Delta-2$ parts.
    If~$|V_j|\le \Delta-3$, then by the pigeonhole principle, two neighbours of~$v$ must have the same neighbour in~$V_j$. 
    This gives a~$C_4$, and thus~$|V_j|\ge \Delta-2$.
    By~\Cref{lem:V_iUniformSize},~$|V_i|=|V_j|\ge \Delta-2$ for all~$i\in [\Delta]$.
\end{proof}

\begin{figure}
    \centering
    \begin{tikzpicture}[
    scale=1.0,
    vertex/.style={circle, fill=black, inner sep=2pt},
    every label/.style={font=\small},
    setlabel/.style={font=\large}
]

    
    \draw[thick] (1,2.8) ellipse (1 and 0.5);
    \node[setlabel] at (0, 3.4) {$V_i$};
    \node[vertex] (u) at (1, 2.8) {};
    \node[anchor=south] at (u.north) {$u$};

    \draw[thick] (3.8, 2.8) ellipse (1 and 0.5);
    \node[setlabel] at (4.8, 3.4) {$V_j$};
    \node[vertex] (vj1) at (3.2, 2.8) {}; 
    \node[vertex] (vj2) at (3.8, 2.8) {}; 
    \node[vertex] (vj3) at (4.4, 2.8) {}; 
    \node[font=\tiny] at (4.1, 2.8) {$\dots$};

    
    \coordinate (c1) at (0, 0);

    \draw[thick] (c1) circle (0.7);
    \node[setlabel] at ($(c1) + (-1, -0.4)$) {$V_1$};
    \node[vertex] (v1_node) at (c1) {};

    \coordinate (c2) at (2, 0);
    \draw[thick] (c2) circle (0.7);
    \node[vertex] (v2_node) at (c2) {};
    \node[setlabel] at ($(c2) + (1,-0.4)$) {$V_2$};

    \node[vertex] (vdel) at (4.8, 0) {};
    \draw[thick] (vdel) ellipse (0.7);
    \node[setlabel] at ($(vdel) + (1, -0.4)$) {$V_\Delta$};

    \node[font=\LARGE] at (3.45, 0) {$\dots$};


    \draw[ultra thick, dotted] (2.4, 3.4) -- (2.4, 2.2);

    \draw[thick] (vj3) -- (vdel);           
    \draw[thick] (u) -- (v1_node);           
    \draw[thick] (v1_node) -- (vj1);         

    \draw[thick] (u) -- (v2_node); 
    \draw[thick] (vj2) -- (v2_node);                                    

    \draw[thick] (u) -- (vdel);

    \draw [decorate,decoration={brace,amplitude=12pt,mirror},thick]
    (-1, -1) .. controls (3.3, -1) and (3.5, -1) .. (5.8,-1)
    node [black,midway,xshift=-0.8cm,yshift=-0.7cm] {$\Delta - 2$};

\end{tikzpicture}
    \caption{A visualization of the lower bound~$|V_i|\ge\Delta-2$ in the proof of \Cref{lem:V_iSizeLowerBound}, for the case when there are non-adjacent parts. 
    The figure shows a subgraph of~$G[N_2]$, illustrating how~$u\in V_i$ covers the vertices of a non-adjacent part~$V_j$. 
    The dotted line between~$V_i$ and~$V_j$ indicate the non-adjacency.
    The additional vertices in~$V_1,\ldots, V_{j-1},V_{j+1}, V_\Delta$ are omitted for clarity.}
    \label{fig:V_iSizeLowerBound}
\end{figure}

\begin{lem}\label{lem:adj=Delta-2}
    If there are non-adjacent parts, then~$|V_i|=\Delta-2$ for all~$i\in [\Delta]$.
\end{lem}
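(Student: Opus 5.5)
Suppose $V_i$ and $V_j$ are non-adjacent parts. By the remark after \Cref{lem:x_ix_jNonAdjacent}, this forces $x_ix_j\in E$. By \Cref{lem:Irreducible}, neither $x_i$ nor $x_j$ has any other neighbour in $N_1$. \Cref{lem:V_iSizeLowerBound} already gives $|V_i|\in\{\Delta-2,\Delta-1\}$, and \Cref{lem:V_iUniformSize} says all parts have the same size. So it suffices to rule out $|V_i|=\Delta-1$. The plan is a degree count at $x_i$: its neighbours are $v_0$, $x_j$, and the $|V_i|$ vertices of $V_i$. Hence
\[
\Delta\ \ge\ \deg(x_i)\ =\ |V_i|+2,
\]
which gives $|V_i|\le\Delta-2$. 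Combined with the lower bound, $|V_i|=\Delta-2$, and by balance every part has this size.

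The one thing to verify is that the neighbours of $x_i$ are really counted correctly. By definition, $V_i$ consists of all vertices of $N_2$ adjacent to $x_i$. The vertex $v_0$ and the vertex $x_j\in N_1$ are distinct from these and from each other, so the three contributions to $\deg(x_i)$ are disjoint. No $C_4$ argument is needed beyond the fact that $x_ix_j\in E$. That fact follows because if $x_ix_j\notin E$, then \Cref{lem:x_ix_jNonAdjacent} forces $V_i$ and $V_j$ to be adjacent, contradicting the assumption.

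There is essentially no obstacle here. The only subtle step is invoking the contrapositive of \Cref{lem:x_ix_jNonAdjacent} to obtain the edge $x_ix_j$. After that, the bound $\deg(x_i)\le\Delta$ at the root's neighbour, together with \Cref{lem:V_iSizeLowerBound,lem:V_iUniformSize}, closes the argument.
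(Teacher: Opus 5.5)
Your proof is correct, but it takes a different route from the paper's.

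The paper fixes a vertex $u_i\in V_i$ and counts how $u_i$ can reach $V_j$. Since the parts are non-adjacent, and since $u_i$'s only neighbour in $N_1$ is $x_i$, which has no neighbour in $V_j$, every vertex of $V_j$ must be covered via a proxy lying in one of the $\Delta-2$ parts other than $V_i,V_j$. This gives $|V_j|\le\Delta-2$, and the paper then finishes with \Cref{lem:V_iUniformSize,lem:V_iSizeLowerBound} exactly as you do. That covering count tacitly uses $C_4$-freeness twice:
\begin{itemize}
\item $u_i$ has at most one neighbour in each such $V_k$, since otherwise there is a $C_4$ through $x_k$;
\item each proxy has at most one neighbour in $V_j$, since otherwise there is a $C_4$ through $x_j$.
\end{itemize}
The paper compresses both points into the phrase ``there are $\Delta-2$ such edges''.

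Your argument instead turns the non-adjacency of the parts into the edge $x_ix_j$. You then read the bound off $\Delta\ge\deg(x_i)\ge|V_i|+2$. This is the same degree count the paper itself uses for the upper bound in \Cref{lem:V_iSizeLowerBound} and again in \Cref{lem:RootShift}.

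Your version is shorter and needs no $C_4$ reasoning at this step. Even the appeal to \Cref{lem:Irreducible} is unnecessary, since the inequality $\deg(x_i)\ge|V_i|+2$ suffices. As a by-product, once $|V_i|=\Delta-2$ you get $\deg(x_i)=\Delta$, which is exactly the fact \Cref{lem:RootShift} later needs to re-root at $x_i$.

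The paper's version keeps to the covering-by-proxy style used throughout \Cref{sec:Characterization}, at the cost of leaving those two $C_4$ checks implicit.
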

\begin{proof}
    Let~$V_i,V_j$ be non-adjacent parts.
    Let~$u_i\in V_i$.
    Then~$u_i$ covers the vertices of~$V_j$ only via proxy in other parts; there are~$\Delta-2$ such edges, and thus~$|V_j|\le \Delta-2$.
    Consequently, by \Cref{lem:V_iUniformSize,lem:V_iSizeLowerBound}, we obtain~$|V_i| = \Delta-2$ for all~$i\in [\Delta]$.
\end{proof}
Note that if~$|V_i| = \Delta-1$, then all parts are pairwise adjacent, i.e.,~$N_1$ forms an independent set.

\subsection{Classifying vertices in \texorpdfstring{$N_2$}{N2}}\label{subsec:Type}

Let~$i\in [\Delta]$ and let~$v\in V_i$. We say that~$v$ is of
\begin{itemize}
    \item \emph{Type-$1$} if~$x_i$ has no neighbours in~$N_1$ and~$v$ has no neighbours in~$V_i$;
    \item \emph{Type-$2$} if~$x_i$ has a neighbour in~$N_1$ and~$v$ has no neighbours in~$V_i$;
    \item \emph{Type-$3$} if~$x_i$ has a neighbour in~$N_1$ and~$v$ has one neighbour in~$V_i$;
\end{itemize}

\begin{lem}\label{lem:1of3Types}
    Every vertex in~$N_2$ is of Type-$1$, of Type-$2$, or of Type-$3$.
\end{lem}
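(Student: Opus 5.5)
The three types exhaust all combinations except one, so the plan is a case split that rules out the single missing case. Fix $v\in V_i$. Two facts are needed: $v$ has at most one neighbour in $V_i$, and if $x_i$ has no neighbour in $N_1$ then $v$ has no neighbour in $V_i$. With these, the cases close as follows.
\begin{itemize}
\item If $x_i$ has no neighbour in $N_1$, the second fact makes $v$ Type-$1$.
\item If $x_i$ has a neighbour in $N_1$, the first fact says $v$ has zero or one neighbour in $V_i$, so $v$ is Type-$2$ or Type-$3$.
\end{itemize}

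For the first fact, apply \Cref{lem:Irreducible} at $x_i$. Every vertex of $V_i$ is adjacent to $x_i$, so $V_i\subseteq N_1(x_i)$, and $G[N_1(x_i)]$ has maximum degree at most $1$. Hence $v$ has at most one neighbour in $V_i$. Here $G$ has more than two vertices since $\Delta\ge 3$. This also shows a Type-$3$ vertex has exactly one neighbour in $V_i$.

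For the second fact, assume $x_i$ has no neighbour in $N_1$ and, for contradiction, $vw\in E$ with $v,w\in V_i$. Then $x_i$ is non-adjacent to every $x_j$ with $j\ne i$. By \Cref{lem:x_ix_jNonAdjacent}, $V_i$ and $V_j$ are joined by a perfect matching for every such $j$. Let $v'$ and $w'$ be the partners of $v$ and $w$ in $V_j$; they are distinct by the matching property.
\begin{itemize}
\item If $v'w'\in E$, then $vv'w'w$ is a $C_4$.
\item Otherwise, use that $x_j$ is adjacent to both $v'$ and $w'$, giving the cycle $v\,v'\,x_j\,w'\,w$ through the edge $wv$. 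This is a $5$-cycle, not a $C_4$, so I need something else.
\end{itemize}
A cleaner contradiction uses degrees. By \Cref{lem:V_iSizeLowerBound}, $|V_i|\ge\Delta-2$. The vertex $v$ has one neighbour $x_i$, one matched neighbour in each of the $\Delta-1$ other parts, and also $w$. This gives $\deg(v)\ge 1+(\Delta-1)+1=\Delta+1>\Delta$, a contradiction. So $v$ has no neighbour in $V_i$.

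The main obstacle is finding this contradiction: the direct search for a $C_4$ fails, and the argument rests on the degree count. That count uses that $x_i$ is non-adjacent to all $\Delta-1$ other $x_j$, so \Cref{lem:x_ix_jNonAdjacent} gives a neighbour of $v$ in every other part.
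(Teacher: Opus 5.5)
Your proof is correct and follows the paper's argument: \Cref{lem:Irreducible} applied at $x_i$ settles the case where $x_i$ has a neighbour in $N_1$, and when it has none, a neighbour of $v$ inside $V_i$, together with $x_i$ and one neighbour in each of the other $\Delta-1$ parts, forces $\deg(v)\ge\Delta+1$. The differences are cosmetic. You get the neighbours in the other parts from the perfect matching of \Cref{lem:x_ix_jNonAdjacent}, whereas the paper reads them off directly from $v$ having to cover each $x_j$ via a proxy. The abandoned $C_4$ attempt and the unused citation of \Cref{lem:V_iSizeLowerBound} can simply be deleted.
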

\begin{proof}
    Since~$N_1$ is~$P_3$-free by \Cref{lem:Irreducible},~$x_i$ has at most one neighbour in~$N_1$.
    Suppose first that~$x_i$ has a neighbour in~$N_1$.
    Let~$v\in V_i$.
    Since $V_i\subseteq N_1(x_i)$,~$v$ has at most one neighbour in~$V_i$ by \Cref{lem:Irreducible}.
    Then~$v$ is either of Type-$2$ or of Type-$3$.
    
    So suppose~$x_i$ has no neighbours in~$N_1$, and suppose~$v$ has a neighbour~$y$ in~$V_i$.
    Note that~$v$ must cover~$x_j$ via proxy in $V_j$, for all~$j\in[\Delta]\setminus\{i\}$.
    This gives~$\deg(v)\ge\Delta-1 + 2 = \Delta+1$, where the $+2$ term comes from the edges~$vy, vx_i$.
    This contradicts our choice of~$\Delta$, and so~$v$ cannot have a neighbour in~$V_i$.
    Thus~$v$ must be of Type-$1$.
\end{proof}

\begin{figure}
\centering
    
    \begin{subfigure}[b]{0.45\textwidth}
        \centering
        \makebox[\linewidth][c]{%
        \begin{tikzpicture}[
            scale=0.85,
            vertex/.style={circle, fill=black, inner sep=1.5pt}
        ]
            \node[vertex] (v0) at (3.5, 4) [label=above:$v_0$] {};
            
            \node[vertex] (x1) at (1, 2) [label=left:$x_1$] {};
            \node[vertex] (x2) at (2.5, 2) [label=right:$x_2$] {};
            \node[vertex] (x3) at (4.5, 2) [label=right:$x_3$] {};
            \node[vertex] (x4) at (6, 2) [label=right:$x_4$] {};
            
            \node[vertex] (a1) at (0, 0) [label=below:$a_1$] {};
            \node[vertex] (a2) at (1, 0) [label=below:$a_2$] {};
            \node[vertex] (b1) at (2, 0) [label=below:$b_1$] {};
            \node[vertex] (b2) at (3, 0) [label=below:$b_2$] {};
            \node[vertex] (c1) at (4, 0) [label=below:$c_1$] {};
            \node[vertex] (c2) at (5, 0) [label=below:$c_2$] {};
            \node[vertex] (d1) at (6, 0) [label=below:$d_1$] {};
            \node[vertex] (d2) at (7, 0) [label=below:$d_2$] {};
            
            \draw[thick] (v0) to[bend right=20] (x1);
            \draw[thick] (v0) to[bend right=10] (x2);
            \draw[thick] (v0) to[bend left=10] (x3);
            \draw[thick] (v0) to[bend left=20] (x4);
            
            \draw[thick] (x1) -- (x2);
            
            \draw[thick] (x1) -- (a1);
            \draw[thick] (x1) -- (a2);
            \draw[thick] (x2) -- (b1);
            \draw[thick] (x2) -- (b2);
            \draw[thick] (x3) -- (c1);
            \draw[thick] (x3) -- (c2);
            \draw[thick] (x4) -- (d1);
            \draw[thick] (x4) -- (d2);
            
            \draw[thick] (a1) -- (a2);
        \end{tikzpicture}}
        \caption{} 
        \label{fig:graph_I}
    \end{subfigure}
    \hfill
    \begin{subfigure}[b]{0.45\textwidth}
        \centering
        \makebox[\linewidth][c]{%
        \begin{tikzpicture}[
            scale=0.7,
            vertex/.style={circle, fill=black, inner sep=1.5pt},
            every label/.style={font=\small}
        ]
            \node[vertex] (A1) at (0, 4) [label=left:$a_1$] {};
            \node[vertex] (A2) at (0, 2) [label=left:$a_2$] {};
            
            \node[vertex] (B1) at (2, 6) [label=above:$b_1$] {};
            \node[vertex] (B2) at (4, 6) [label=above:$b_2$] {};
            
            \node[vertex] (C1) at (6, 4) [label=right:$c_1$] {};
            \node[vertex] (C2) at (6, 2) [label=right:$c_2$] {};
            
            \node[vertex] (D1) at (2, 0) [label=below:$d_1$] {};
            \node[vertex] (D2) at (4, 0) [label=below:$d_2$] {};
            
            \draw[thick] (0, 3) ellipse (1 and 2);
            \draw[thick] (3, 6) ellipse (2 and 1);
            \draw[thick] (6, 3) ellipse (1 and 2);
            \draw[thick] (3, 0) ellipse (2 and 1);
            
            \node at (-1.25, 4.5) {$V_1$};
            \node at (5, 7) {$V_2$};
            \node at (7.25, 4.5) {$V_3$};
            \node at (5, -1) {$V_4$};
            
            \draw[thick] (A1) -- (A2);
            
            \draw[thick] (B1) -- (C2);
            \draw[thick] (B2) -- (C1);
            \draw[thick] (B1) -- (D1);
            \draw[thick] (B2) -- (D2);
            \draw[thick] (A1) -- (D2);
            \draw[thick] (A2) -- (D1);
            \draw[thick] (A1) -- (C2);
            \draw[thick] (A2) -- (C1);
            \draw[thick] (D1) -- (C1);
            \draw[thick] (D2) -- (C2);
            

            \draw[ultra thick, dotted] (0.2,5.8) -- (1.8,4.2);
            
        \end{tikzpicture}}
        \caption{} 
        \label{fig:graph_II}
    \end{subfigure}
    
    \caption{A~$C_4$-free~$G$ of diameter-$2$ with maximum degree~$4$. (a) $G$ without the edges in~$E(G[N_2])$. $c_1,c_2,d_1,d_2$ are Type-$1$ vertices, $b_1,b_2$ are Type-$2$ vertices, and~$a_1,a_2$ are Type-$3$ vertices. (b) $G[N_2]$. 
    The dotted line between~$V_1$ and~$V_2$ indicates that the two parts are non-adjacent.
    In accordance with \Cref{lem:TypeTriangles},~$b_1,b_2$ are not contained in a~$C_3$.
    $a_1,c_2,d_2$ forms a~$C_3$ as they are pairwise adjacent and none of the vertices are Type-$2$.}
    \label{fig:Example}
\end{figure}

We refer the reader to \Cref{fig:Example} to see an example of a~$C_4$-free graph of diameter-$2$ with~$\Delta=4$.
We show the degree of Type-$i$ vertices for~$i\in[3]$.

\begin{lem}\label{lem:T1Delta}
    A vertex of Type-$1$ is of degree~$\Delta$.
\end{lem}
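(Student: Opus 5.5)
Let $v\in V_i$ be of Type-$1$, so $x_i$ has no neighbours in $N_1$ and $v$ has no neighbours in $V_i$. Since $\deg(v)\le\Delta$ by the choice of $\Delta$, it suffices to exhibit $\Delta$ distinct neighbours of $v$. One neighbour is $x_i$. By \Cref{lem:N2SingleEdge}, $x_i$ is the only neighbour of $v$ in $N_1$, and $v$ has no neighbours at distance $3$ from $v_0$. Hence every other neighbour of $v$ lies in $N_2\setminus V_i$.

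The main step is to use the diameter-$2$ condition, as in the proof of \Cref{lem:1of3Types}. For each $j\in[\Delta]\setminus\{i\}$, the vertex $x_j$ is not adjacent to $x_i$, because $x_i$ has no neighbours in $N_1$. So $x_j$ is not a neighbour of $v$, and $v$ must cover $x_j$ via a proxy $w$. This proxy is not $v_0$, since $v\notin N_1$. It is not in $N_1$, since $v$'s only neighbour there is $x_i$, which is not adjacent to $x_j$. It is not in $V_i$, since $v$ has no neighbours in $V_i$. Therefore $w\in N_2$ is adjacent to $x_j$, which means $w\in V_j$. (Equivalently, \Cref{lem:x_ix_jNonAdjacent} gives a perfect matching between $V_i$ and $V_j$, and $v$'s partner in that matching serves as the proxy.)

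The canonical partition is a partition, so the proxies chosen for different $j$ lie in different parts and are therefore distinct. Together with $x_i$, this gives $\deg(v)\ge 1+(\Delta-1)=\Delta$. Combined with $\deg(v)\le\Delta$, we get $\deg(v)=\Delta$. As a byproduct, $v$ has exactly one neighbour in each $V_j$ with $j\ne i$ and no further neighbours.

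I expect no serious obstacle. The only point needing care is ruling out that the proxy for $x_j$ lies in $N_1$ or in $V_i$, and the two Type-$1$ conditions handle exactly those two cases.
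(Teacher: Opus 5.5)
Your proof is correct and follows the paper's argument: $x_i$ is the only neighbour of $v$ in $N_1$, and each of the other $\Delta-1$ vertices $x_j$ needs its own proxy. These proxies are distinct because they lie in distinct parts $V_j$, i.e.\ by \Cref{lem:N2SingleEdge}, and the maximum-degree bound then forces $\deg(v)=\Delta$. You are slightly more explicit than the paper in ruling out $v_0$, $N_1$ and $V_i$ as proxy locations and in invoking $\deg(v)\le\Delta$. One small wording point: $x_j\notin N_1(v)$ follows from \Cref{lem:N2SingleEdge}, not from $x_ix_j\notin E$ as your ``So'' suggests.
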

\begin{proof}
    Let~$v\in V_i$ be of Type-$1$.
    Then~$v$ must cover all vertices in~$N_1$ via adjacency or via proxy. 
    Via adjacency, it can cover exactly one vertex~$x_i$, since by assumption,~$x_i$ has no neighbours in~$N_1$.
    All other~$\Delta-1$ vertices in~$N_1$ must be covered via proxy, and no two vertices in~$N_1$ can be covered by the same proxy, by \Cref{lem:N2SingleEdge}.
    So $\deg(v) = \Delta$.
\end{proof}

\begin{lem}\label{lem:T2Delta-1}
    A vertex of Type-$2$ is of degree~$\Delta-1$.
\end{lem}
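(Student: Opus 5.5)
The plan mirrors the proof of \Cref{lem:T1Delta}: count exactly the neighbours of a Type-$2$ vertex by how it covers the vertices of $N_1$.

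Let $v\in V_i$ be of Type-$2$, and let $x_j$ be the unique neighbour of $x_i$ in $N_1$ (unique by \Cref{lem:Irreducible}). By \Cref{lem:N2SingleEdge}, $v$ has exactly one neighbour in $N_1$, namely $x_i$. By the definition of Type-$2$, $v$ has no neighbours in $V_i$. Also $v\in N_2$, so $v$ is not adjacent to $v_0$. Hence every neighbour of $v$ other than $x_i$ lies in $\bigcup_{k\ne i}V_k$.

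\textbf{Upper bound on the neighbours in $V_j$.} I claim $v$ has no neighbour in $V_j$. Suppose $w\in V_j$ were adjacent to $v$. Then $v x_i x_j w$ is a $C_4$, since $vx_i$, $x_ix_j$, $x_jw$ and $wv$ are all edges and the four vertices are distinct. This is forbidden.

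\textbf{Neighbours in the other parts.} For each $k\notin\{i,j\}$, the vertex $x_k$ is not adjacent to $x_i$. Either invoke \Cref{lem:x_ix_jNonAdjacent} (the edges between $V_i$ and $V_k$ form a perfect matching), or argue directly: $v$ must cover $x_k$ via a proxy, and that proxy must lie in $V_k$. Either way, $v$ has exactly one neighbour in each such $V_k$. It cannot have two, since two neighbours of $v$ in $V_k$ would share the common neighbour $x_k$ and form a $C_4$ with $v$.

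\textbf{Counting.} This gives
\[
\deg(v) = 1 + (\Delta-2) = \Delta-1,
\]
where the $1$ accounts for $x_i$ and the $\Delta-2$ for one neighbour in each $V_k$ with $k\notin\{i,j\}$.

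The only step needing care is excluding edges from $v$ into $V_j$, and the explicit $C_4$ above handles it. Using the perfect-matching statement of \Cref{lem:x_ix_jNonAdjacent} is what gives exactly one neighbour in each other part, rather than merely at least one.
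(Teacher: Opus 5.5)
Your proof is correct and is essentially the paper's argument. Like the paper, you count the neighbours of $v$ by how they cover $N_1$: $x_i$ by adjacency, $x_j$ via the proxy $x_i$, and each of the remaining $\Delta-2$ vertices $x_k$ via exactly one proxy in $V_k$. Your explicit $C_4$ $v x_i x_j w$, which rules out neighbours in $V_j$, spells out the paper's terser remark that no vertex of $N_1$ can be covered twice via proxy, and it makes the upper bound $\deg(v)\le\Delta-1$ more transparent than the paper does.
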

\begin{proof}
    Let~$v\in V_i$ be of Type-$2$.
    Then~$v$ must cover all vertices in~$N_1$ via adjacency or via proxy. 
    Via adjacency, it can cover exactly one vertex~$x_i$; using~$x_i$ as proxy, it also covers another vertex~$x_j$.
    All other~$\Delta-2$ vertices in~$N_1$ must be covered via proxy, and no two vertices in~$N_1$ can be covered by the same proxy, by \Cref{lem:N2SingleEdge}.
    It cannot cover vertices in~$N_1$ twice via proxy, as that would form a~$C_4$.
    So $\deg(v) = \Delta-1$.
\end{proof}

\begin{lem}\label{lem:T3Delta}
	A vertex of Type-$3$ is of degree~$\Delta$.
\end{lem}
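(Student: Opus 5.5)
I will mirror the proofs of \Cref{lem:T1Delta,lem:T2Delta-1}, counting the neighbours of $v$ in terms of which vertices of $N_1$ they help $v$ cover. Let $v\in V_i$ be of Type-$3$. Then $x_i$ has a (unique, by \Cref{lem:Irreducible}) neighbour $x_j\in N_1$, and $v$ has exactly one neighbour $y\in V_i$. By \Cref{lem:N2SingleEdge}, the only neighbour of $v$ in $N_1$ is $x_i$. Since $v\in N_2$, it is not adjacent to $v_0$. Hence every neighbour of $v$ is $x_i$, $y$, or a vertex of $N_2\setminus V_i$.

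\emph{Lower bound.} The vertex $v$ must cover every $x_k$ with $k\notin\{i,j\}$. It cannot do so via adjacency, nor via $x_i$ as proxy, because $x_i$'s only neighbour in $N_1$ is $x_j$. It cannot use $y$ as proxy either, since $y\in V_i$ is adjacent only to $x_i$ in $N_1$. So for each of these $\Delta-2$ indices $k$, $v$ needs a neighbour in $V_k$. These neighbours are distinct, since each vertex of $N_2$ has a single neighbour in $N_1$. Adding the neighbours $x_i$ and $y$ gives $\deg(v)\ge (\Delta-2)+2=\Delta$. By maximality of $\Delta$, this forces $\deg(v)=\Delta$.

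\emph{Upper bound (for completeness).} One can also check directly that $v$ has no further neighbours. First, $v$ has no neighbour in $V_j$: such a neighbour $w$ would give the $4$-cycle $v\,x_i\,x_j\,w$. Second, $v$ has at most one neighbour in each $V_k$ with $k\ne i,j$: two such neighbours would, together with $x_k$, form a $C_4$ with $v$. Third, $v$ has no neighbours in $V_i$ other than $y$, by the definition of Type-$3$. Together these give $\deg(v)\le 2+(\Delta-2)=\Delta$.

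The only delicate step is the lower bound: one must rule out that the proxies $x_i$ or $y$ cover some $x_k$ with $k\neq j$. This reduces to \Cref{lem:Irreducible} and \Cref{lem:N2SingleEdge}, as used above, so I expect no real obstacle.
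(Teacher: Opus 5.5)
Your proof is correct and follows the paper's approach: the paper likewise counts the edge to $x_i$ and one proxy neighbour in each $V_k$ with $k\notin\{i,j\}$ (the $\Delta-1$ edges from the Type-$2$ argument), and then adds the edge to $y$. Your version is more explicit than the paper's one-line proof, which leaves the upper bound implicit; either of your closing arguments (maximality of $\Delta$, or the direct $C_4$ checks) supplies that upper bound.
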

\begin{proof}
	This case is similar to the proof of \Cref{lem:T2Delta-1}.
	It requires the~$\Delta-1$ edges as described in the proof, and it has an additional edge to a neighbour in the same part.
	So the degree of the vertex must be~$\Delta$.
\end{proof}

So every vertex in~$N_2$ is of degree~$\Delta-1$ or~$\Delta$.
By definition, each part~$V_i$ can contain vertices of only Type-1, only Type-2, only Type-3, or only Type-2 and Type-3.
In the first three cases, we say that~$V_i$ is a \emph{part of Type-$j$}, for~$j\in[3]$.

The following lemma states that if~$N_2$ contains a Type-$2$ vertex, then we may assume that the canonical partition has a Type-$1$ part.
This will be helpful in \Cref{sec:ColorTypes} to reduce the number of cases.

\begin{lem}\label{lem:RootShift}
    If~$N_2$ contains a vertex of Type-$1$ or Type-$2$, then there exists a vertex~$w_0$ such that the canonical partition with~$w_0$ as root contains a Type-$1$ part.
\end{lem}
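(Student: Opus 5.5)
If $N_2$ contains a Type-$1$ vertex there is nothing to do, and if it contains a Type-$2$ vertex we re-root the graph at the neighbour $x_i$ of that vertex in $N_1$. We split into these two cases according to which type of vertex $N_2$ contains.

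\emph{Type-$1$ case.} Suppose $N_2$ contains a Type-$1$ vertex $v\in V_i$. By definition, $x_i$ has no neighbour in $N_1$. By the proof of \Cref{lem:1of3Types}, every vertex of $V_i$ is then of Type-$1$, so $V_i$ is a Type-$1$ part. We take $w_0=v_0$.

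\emph{Type-$2$ case.} Suppose $N_2$ contains a Type-$2$ vertex $v\in V_i$, so $x_ix_j\in E$ for some $j\neq i$. We propose $w_0=x_i$, and carry out four steps.

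\begin{enumerate}
\item \emph{$x_i$ has maximum degree.} Since $x_i$ has no neighbours in $N_1$ other than $x_j$, we have $N_1(x_i)=\{v_0,x_j\}\cup V_i$. By \Cref{lem:V_iSizeLowerBound}, $\deg(x_i)=|V_i|+2\ge \Delta$. Hence $\deg(x_i)=\Delta$ and $x_i$ is a legitimate root. We also note that $\Delta\ge 3$ still holds.

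\item \emph{The new $N_2$ is nonempty.} Since $\Delta\ge 3$, there is an index $k\notin\{i,j\}$. By \Cref{lem:N1Neighbour}, $V_k\neq\emptyset$. A vertex $u\in V_k$ is not adjacent to $x_i$, because its unique $N_1$-neighbour is $x_k$ by \Cref{lem:N2SingleEdge}. So $u$ lies at distance $2$ from $x_i$. Therefore $N_2(x_i)\neq\emptyset$, and the canonical partition with respect to $x_i$ is defined.

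\item \emph{$v$ is isolated within the new $N_1$.} We check that $v$ has no neighbour in $N_1(x_i)\setminus\{v\}$.
\begin{itemize}
\item $v$ is not adjacent to $v_0$, since $v\in N_2$.
\item $v$ is not adjacent to $x_j$, since by \Cref{lem:N2SingleEdge} its only neighbour in $N_1$ is $x_i$.
\item $v$ has no neighbour in $V_i$, because it is of Type-$2$.
\end{itemize}

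\item \emph{Conclusion.} In the canonical partition rooted at $x_i$, let $V'_v$ be the part attached to $v$. It is nonempty by \Cref{lem:N1Neighbour}. By \Cref{lem:1of3Types} and the definitions, every vertex of $V'_v$ is of Type-$1$, since $v$ has no neighbour in $N_1(x_i)$. Hence this is a Type-$1$ part, as required.
\end{enumerate}

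The step most likely to need care is the first: confirming that $x_i$ actually has degree $\Delta$, so that it qualifies as a root. This depends on the lower bound $|V_i|\ge\Delta-2$ from \Cref{lem:V_iSizeLowerBound}, together with \Cref{lem:Irreducible} to exclude further neighbours of $x_i$ in $N_1$. The remaining steps follow directly from the definitions and the earlier lemmas.
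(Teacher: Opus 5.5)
Your proof is correct and follows the paper's route. The Type-$1$ case is immediate; in the Type-$2$ case you re-root at $x_i$, check $\deg(x_i)=\Delta$, and observe that $v$ has no neighbours in $N_1(x_i)$, so its part in the new canonical partition is of Type-$1$. Beyond that there are two small improvements: you get $\deg(x_i)=\Delta$ from \Cref{lem:V_iSizeLowerBound} plus maximality of $\Delta$ rather than from \Cref{lem:adj=Delta-2}, which tacitly needs $V_i,V_j$ to be non-adjacent, and you explicitly verify $N_2(x_i)\neq\emptyset$, which the paper leaves implicit.
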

\begin{proof}
    We show an equivalent statement, that there exists a part which is adjacent to all other parts.
    Let~$v\in V_i$. 
    If~$v$ is of Type-$1$, then~$V_i$ is adjacent to all other parts by definition and we are done.
    So suppose~$v$ is of Type-$2$.
    By definition,~$x_i$ has a neighbour~$x_j$ in~$N_1$.
    So~$\deg(x_i) = |V_i| + 2$, where the~$+2$ term comes from the edges~$x_iv_0$ and~$x_ix_j$.
    As~$|V_i|= \Delta-2$ by \Cref{lem:adj=Delta-2}, we have~$\deg(x_i)=\Delta$.
    Take~$x_i$ as the root (instead of~$v_0$).
    Then~$v\in N_1(x_i)$, and~$v$ has no neighbours in~$N_1(x_i)$.
    In the corresponding canonical partition, 
    the part corresponding to~$v$ must be adjacent to all other parts, since~$v$ has no neighbours in~$V_i$.
    This gives the required statement.
\end{proof}

\subsection{Triangles in \texorpdfstring{$N_2$}{N2}}


The following lemma states that there are in general a lot of~$C_3$'s in~$G[N_2]$. 
These appear only between adjacent vertices which are not Type-$2$.
See \Cref{fig:graph_II}.

\begin{lem}\label{lem:TypeTriangles}
    Suppose~$|V_i| = \Delta-2$ for all~$i\in [\Delta]$.
    Let~$u,v\in N_2$ be adjacent vertices in different parts.
    Then~$uv$ is in a~$C_3$ in~$G[N_2]$ if and only if~$u$ and~$v$ are both not Type-$2$.
\end{lem}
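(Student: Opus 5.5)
The plan is to prove both directions by counting how $u$ (or $v$) covers the other part via proxies, using the rigid neighbourhood structure from \Cref{lem:N2SingleEdge,lem:x_ix_jNonAdjacent,lem:1of3Types}. Let $u\in V_i$ and $v\in V_j$ with $i\ne j$.

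\textbf{Preliminary facts.} I would first record the following, each of which follows by exhibiting a $C_4$.
\begin{enumerate}[label=(\alph*)]
\item A vertex of $N_2$ has at most one neighbour in any part $V_k$. Two neighbours $a,b\in V_k$ of $y$ would give the $4$-cycle $y\,a\,x_k\,b$.
\item If $x_ix_{i'}\in E$, then no vertex of $V_i$ has a neighbour in $V_{i'}$. An edge $yw$ with $y\in V_i$, $w\in V_{i'}$ would give the $4$-cycle $y\,x_i\,x_{i'}\,w$.
\item Distinct neighbours of $u$ cannot share a neighbour other than $u$.
\item A triangle $uvw$ in $G[N_2]$ has $w$ in a third part $V_k$ with $k\ne i,j$. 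If $w\in V_i$, then $u\,v\,w\,x_i$ is a $C_4$; the case $w\in V_j$ is symmetric.
\end{enumerate}
Combining (a), (b) with \Cref{lem:x_ix_jNonAdjacent,lem:T1Delta,lem:T2Delta-1,lem:T3Delta} gives the neighbourhood of $u$ in the other parts. If $u$ is of Type-$1$, it has exactly one neighbour in each of the $\Delta-1$ other parts. If $u$ is of Type-$2$ or Type-$3$, with $x_{i'}$ the $N_1$-neighbour of $x_i$, it has exactly one neighbour in each $V_k$ for $k\ne i,i'$. In particular $j\ne i'$.

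\textbf{Type-$2$ excludes triangles.} Say $u$ is of Type-$2$. Then $u$ must cover the $\Delta-3$ vertices of $V_j\setminus\{v\}$ via proxy. Since $j\ne i'$, the vertex $x_i$ is not adjacent to $V_j$, so the only candidate proxies are the $\Delta-3$ neighbours of $u$ in parts $V_k$ with $k\ne i,i',j$. By (a), each has at most one neighbour in $V_j$, and by (c) these neighbours are distinct. The map from proxies to $V_j\setminus\{v\}$ must therefore be a bijection. So no neighbour of $u$ is adjacent to $v$, and by (d) $uv$ lies in no triangle.

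\textbf{Non-Type-$2$ forces a triangle.} Now suppose neither $u$ nor $v$ is of Type-$2$. I would pick the side, say $v$, from which to count. The goal is to exhibit $\Delta-2$ neighbours of $v$ other than $u$, each of which has exactly one neighbour in $V_i$.
\begin{itemize}
\item A neighbour of $v$ in a part $V_k$ with $x_kx_i\notin E$ qualifies, by the perfect matching of \Cref{lem:x_ix_jNonAdjacent}.
\item If $v$ is of Type-$3$, its partner in $V_j$ qualifies, since $V_j$ is matched with $V_i$ when $x_ix_j\notin E$.
\end{itemize}
By (c), these $\Delta-2$ neighbours in $V_i$ are distinct. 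Since $|V_i|=\Delta-2$, pigeonhole forces one of them to be $u$. That gives a common neighbour $w$ of $u$ and $v$, and by (d) $w\in N_2$.

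\textbf{Main obstacle.} The delicate step is the case analysis when $x_i$ or $x_j$ has an $N_1$-neighbour. Some neighbours of $v$ then sit in the part $V_{i'}$ paired with $V_i$, and these have no neighbour in $V_i$. They must be compensated by the Type-$3$ partner, and $x_ix_j\notin E$ must be checked (it holds because $u,v$ are adjacent, by (b)). I would enumerate the cases $(\text{Type-}1,\text{Type-}1)$, $(\text{Type-}1,\text{Type-}3)$ and $(\text{Type-}3,\text{Type-}3)$, choosing in each case the side whose count reaches $\Delta-2$. For $(1,3)$, counting from the Type-$3$ vertex $v$ gives $\Delta-3$ matched neighbours plus its partner. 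For $(1,1)$ either side works. For $(3,3)$ both $x_i$ and $x_j$ have partners, so I would verify that the partner of $v$ still compensates for the lost part $V_{i'}$.
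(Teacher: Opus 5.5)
Your strategy is the paper's: count the proxies through which one endpoint reaches the other endpoint's part, with fact (c) making them injective. Your $(1,1)$ and $(1,3)$ counts are correct. However, the $(3,3)$ case you defer does not close as you have set it up.

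Take $u\in V_i$ and $v\in V_j$, both of Type-$3$. The indices $i,i',j,j'$ are pairwise distinct. The neighbours of $v$ other than $u$ are $x_j$, its partner $w\in V_j$, one vertex of $V_{i'}$, and one vertex in each $V_k$ with $k\notin\{i,i',j,j'\}$. Only $w$ and these last $\Delta-4$ vertices have a neighbour in $V_i$, so you obtain $\Delta-3$ qualifying neighbours, not $\Delta-2$.

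The partner $w$ is already spent offsetting the missing $V_{j'}$-neighbour, exactly as in your $(1,3)$ count, so it cannot also pay for $V_{i'}$. Counting from $u$ gives $\Delta-3$ by symmetry, so neither side reaches $\Delta-2$. Since $\Delta-3$ distinct vertices of $V_i$ may a priori be exactly $V_i\setminus\{u\}$, pigeonhole yields nothing.

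The missing idea is that the edge $uv$ is itself a proxy route: $v$ reaches the partner $u'$ of $u$ through $u$. By (c), no other neighbour of $v$ is adjacent to $u'$. So the $\Delta-3$ qualifying neighbours land injectively in $V_i\setminus\{u'\}$, a set of size $\Delta-3$ that contains $u$, and one of them is adjacent to $u$. This is the paper's step ``via $uv$, we also cover $w$ via proxy''. Counting from $u$ into $V_j$, the paper uses $v$ (reaching $w$), the partner of $u$, and $\Delta-4$ further neighbours, with the $V_{j'}$-neighbour lost, for a total of $\Delta-2$.

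The same omission invalidates your Type-$2$ argument when $v$ is of Type-$3$. The assertion that the only candidate proxies are the $\Delta-3$ neighbours of $u$ in parts $V_k$, $k\ne i,i',j$, is then false. On one side, $v$ itself reaches its partner $w$. On the other, the neighbour of $u$ in $V_{j'}$ has no neighbour in $V_j$ by (b).

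So the claimed bijection from those $\Delta-3$ vertices onto $V_j\setminus\{v\}$ does not exist: one of them has no image, and $w$ is hit by $v$. If you merely add $v$ to the list without discarding the $V_{j'}$-neighbour, there is a spare proxy that could be adjacent to $v$.

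Correctly listed, the useful proxies are $v$ and the $\Delta-4$ neighbours of $u$ in parts other than $V_i,V_{i'},V_j,V_{j'}$. That gives exactly $\Delta-3$ injective proxies for the $\Delta-3$ vertices of $V_j\setminus\{v\}$, so none is adjacent to $v$. This is the paper's case $(2,3)$, where $p(u)=\Delta-3$. For $v$ of Type-$1$ or Type-$2$, your argument is fine as written.
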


\begin{proof}
    Let~$u\in V_i, v\in V_j$ such that~$uv\in E$.
    Let~$u,v$ be of Type-$a$ and Type-$b$, respectively, for~$a,b\in [3]$.
    We write~$(a,b)$ to consider the~$9$ possible cases (this reduces to~$6$ by symmetry). 
    If~$b\in\{2,3\}$, then let~$V_k$ denote the part which is non-adjacent to~$V_j$.
    If~$b=3$, then let~$w\in V_j$ denote the neighbour of~$v$ in~$V_j$.
    
    For the remainder of this proof, we write~$p(u)$ to denote the number of vertices in~$V_j$ which are covered by~$u$ via proxy.
    As~$|V_j|=\Delta-2$ by assumption and as~$u$ must cover all vertices of~$V_j$, we must have~$p(u)\in \{\Delta-3, \Delta-2\}$.
    Indeed,~$p(u)>\Delta-2$ will lead to a vertex being covered twice via proxy and hence a~$C_4$; on the other hand,~$p(u) < \Delta-3$ will mean that~$u$ does not cover all vertices of~$V_j$, violating the diameter-$2$ condition.
    If~$p(u) = \Delta-3$, then~$u$ covers each vertex of~$V_j$ exactly once.
    If~$p(u) = \Delta-2$, then one vertex must be covered twice, since~$v$ is covered via adjacency. 
    To prevent covering the same vertex twice via proxy, this would mean~$v$ must be covered once via adjacency and once via proxy. Then,~$u$ and~$v$ must lie in a~$C_3$.

    It remains to show that~$p(u) = \Delta-2$ if and only if~$(a,b)\in \{(1,1),(1,3),(3,3)\}$.
    
    \paragraph{Case 1: $(a,b)=(1,1)$.}
    By \Cref{lem:T1Delta}, since we do not consider the edge~$x_iu$ in~$G[N_2]$, we have $\deg_{G[N_2]}(u) = \Delta-1$.
    The edge~$uv$ does not cover any vertices of~$V_j$ via proxy.
    Each of the remaining~$\Delta-2$ edges will cover one vertex in~$V_j$ via proxy, since~$V_i$ and~$V_j$ are adjacent to all parts. So~$p(u) = \Delta-2$.
    \paragraph{Case 2: $(a,b)$ is $(1,2)$ (or~$(2,1)$).}
    By \Cref{lem:T1Delta}, $\deg_{G[N_2]}(u) = \Delta-1$.
    Again,~$uv$ does not contribute to~$p(u)$ just like in Case 1.
    Of the remaining~$\Delta-2$ edges, one is \emph{lost} (i.e. it does not cover anything in~$V_j$ via proxy) as~$V_k$ is adjacent to~$V_i$ and is non-adjacent to~$V_j$.
    Thus~$p(u) = \Delta-3$.
    In case of~$(2,1)$, we swap the roles of~$u$ and~$v$.

    \paragraph{Case 3: $(a,b)$ is~$(1,3)$ (or~$(3,1)$).}
    By \Cref{lem:T1Delta}, $\deg_{G[N_2]}(u) = \Delta-1$.
    Via~$uv$, we also cover~$w$ via proxy. 
    Of the remaining~$\Delta-2$ edges, one is lost as~$V_k$ is non-adjacent to~$V_j$.
    Thus,~$p(u) = 1 + \Delta-3 = \Delta-2$.
    
    \paragraph{Case 4: $(a,b) = (2,2)$.}
    By \Cref{lem:T2Delta-1}, $\deg_{G[N_2]}(u) = \Delta-2$.
    The edge~$uv$ does not contribute to~$p(u)$.
    From the remaining $\Delta-3$ edges, we get that~$p(u)\le \Delta-3$.
    However, we lose one edge as~$V_k$ is non-adjacent to~$V_j$.
    Thus,~$p(u) < \Delta-3$ which means this case is impossible.
    
    \paragraph{Case 5: $(a,b)$ is~$(2,3)$ (or~$(3,2)$).}
    By \Cref{lem:T2Delta-1}, $\deg_{G[N_2]}(u) = \Delta-2$.
    Via~$uv$, we also cover~$w$ via proxy.  
    Of the remaining~$\Delta-3$ edges, one is again lost to~$V_k$, and thus~$p(u) = 1 + \Delta-4 = \Delta -3$.
    (Compared to Case 4,~$u$ now also covers a vertex~$w$ of~$V_j$ via proxy using the edge~$uv$.)
    
    \paragraph{Case 6: $(a,b) = (3,3)$.}
    By \Cref{lem:T3Delta}, $\deg_{G[N_2]}(u) = \Delta-1$.
    Via~$uv$, we also cover~$w$ via proxy.
    Of the remaining~$\Delta-2$ edges, one is lost to~$V_k$, and thus~$p(u) = 1 + \Delta-3 = \Delta-2$.
\end{proof}

Note that if vertices~$u,v$ are adjacent and are in the same parts, then~$uv$ cannot be in a~$C_3$ in~$G[N_2]$.
If it was, then~$u,v$ must have a common neighbour in another part, which results in a~$C_4$.
The next corollary follows immediately from Case~$4$ in the proof of \Cref{lem:TypeTriangles}.

\begin{cor}\label{cor:AtMostOneType2}
	Two Type-$2$ vertices cannot be adjacent.
\end{cor}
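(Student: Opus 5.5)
Suppose $u$ and $v$ are adjacent Type-$2$ vertices. We derive a contradiction, and we split according to whether $u$ and $v$ lie in the same part or in different parts.

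\emph{Different parts.} Let $u\in V_i$ and $v\in V_j$ with $i\ne j$. Type-$2$ vertices exist, so some $x_i$ has a neighbour in $N_1$. Hence $N_1$ is not independent and there are non-adjacent parts. By the remark after \Cref{lem:adj=Delta-2}, or by \Cref{lem:adj=Delta-2} itself, $|V_k|=\Delta-2$ for all $k$, so the hypothesis of \Cref{lem:TypeTriangles} holds. We then repeat the counting of Case~4 in that proof. Let $p(u)$ be the number of vertices of $V_j$ that $u$ covers via proxy. By \Cref{lem:T2Delta-1}, $\deg_{G[N_2]}(u)=\Delta-2$. The edge $uv$ covers nothing of $V_j$ via proxy, because $v$ is Type-$2$ and has no neighbour in $V_j$. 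At most $\Delta-3$ edges remain. One of them goes to the part $V_k$ that is non-adjacent to $V_j$ (here $k\neq i$, since $u$ has an edge into $V_j$), and that edge is lost. Therefore $p(u)\le\Delta-4$.

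Now $u$ covers exactly one vertex of $V_j$ by adjacency, namely $v$. Every one of the remaining $\Delta-3$ vertices of $V_j$ must be covered via proxy, by the diameter-$2$ condition. This requires $p(u)\ge\Delta-3$, a contradiction. It may also be necessary to check that $u$ has exactly one edge to $V_k$. This follows from \Cref{lem:x_ix_jNonAdjacent} (perfect matching) whenever $x_i x_k\notin E$. If instead $x_i x_k\in E$, then $V_k$ is the part non-adjacent to $V_i$, and the count is adjusted by the same argument with the roles read off from \Cref{lem:T2Delta-1}.

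\emph{Same part.} Let $u,v\in V_i$. This case is immediate: Type-$2$ vertices by definition have no neighbours in their own part, so $uv\notin E$.

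The main obstacle is the bookkeeping in the different-parts case: confirming that the ``lost'' edge really goes to a part non-adjacent to $V_j$ and is not double-counted. Apart from this, the corollary is exactly Case~4 of \Cref{lem:TypeTriangles}, which the paper already observes is impossible.
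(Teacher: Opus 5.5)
Your argument is the paper's own: the corollary is read off from Case~4 in the proof of \Cref{lem:TypeTriangles}. Your same-part case is the trivial complement that the paper leaves implicit.

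The one step to repair is your closing hedge about the possibility $x_ix_k\in E$. No ``adjusted count'' would rescue that case. The vertex $u$ would then have no neighbour in $V_k$, since an edge $uw$ with $w\in V_k$ would give the $C_4$ $u\,x_i\,x_k\,w$. Nothing would be lost, and you would only get $p(u)\le\Delta-3$, which is not a contradiction.

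Instead, rule the case out. Because $V_k$ and $V_j$ are non-adjacent, $x_jx_k\in E$. Since $i\ne j$, \Cref{lem:Irreducible} then forbids $x_k$ from having a second neighbour $x_i$ in $N_1$. Hence $x_ix_k\notin E$, and the perfect matching of \Cref{lem:x_ix_jNonAdjacent} gives $u$ a neighbour in $V_k$. With that, your count $p(u)\le\Delta-4<\Delta-3$ goes through. The paper's Case~4 silently relies on this same fact when it says one edge is ``lost'' to $V_k$.
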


\section{Necessary conditions for a proper \texorpdfstring{$3$}{3}-coloring}\label{sec:Color}

Throughout this section, we assume the existence of a proper 3-coloring~$c:V\rightarrow [3]$ and show some necessary conditions regarding the canonical partition and~$\Delta$.
Without loss of generality, we assume~$c(v_0) = 1$.
Then~$c(x_i)\in \{2,3\}$ for all~$i$, 
and so the vertices in~$V_i$ may only receive colors from~$[3]\setminus \{c(x_i)\} = \{1,5-c(x_i)\}$.
We say that~$V_i$ is of \emph{color class~$5-c(x_i)$}, denoted by~$c(V_i) = 5 - c(x_i)$.
For example, if~$c(x_i) = 2$, then~$V_i$ is of color class~$3$ or~$c(V_i) = 3$. 
If~$c(x_i) = 3$, then~$V_i$ is of color class~$2$ or~$c(V_i)=2$.

\begin{lem}\label{lem:NonAdjDiffColorClass}
	Suppose~$V_i$ and~$V_j$ are non-adjacent parts.
	If there exists a proper~$3$-coloring, then they must be of different color classes, i.e.,~$c(V_i)\ne c(V_j)$.
\end{lem}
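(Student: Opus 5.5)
If $V_i$ and $V_j$ are non-adjacent, then by the remark after \Cref{lem:x_ix_jNonAdjacent}, $x_i$ and $x_j$ must be adjacent. The plan is simply to read off the color classes from this edge. Since $x_ix_j\in E$, any proper $3$-coloring has $c(x_i)\ne c(x_j)$.

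Both $x_i$ and $x_j$ are adjacent to $v_0$, and $c(v_0)=1$, so $c(x_i),c(x_j)\in\{2,3\}$. Being distinct, they are exactly $2$ and $3$ in some order. Hence $c(V_i)=5-c(x_i)$ and $c(V_j)=5-c(x_j)$ are also $3$ and $2$ in some order, and in particular $c(V_i)\ne c(V_j)$.

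The only point needing care is the justification that non-adjacent parts force $x_ix_j\in E$. This is the contrapositive of \Cref{lem:x_ix_jNonAdjacent}: if $x_ix_j\notin E$, then $V_i$ and $V_j$ are adjacent (indeed joined by a perfect matching). That lemma is already available, so there is no real obstacle and the argument is a two-line deduction.
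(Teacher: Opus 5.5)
Your proposal is correct and matches the paper's proof. Both use the edge $x_ix_j$, which is forced by the contrapositive of \Cref{lem:x_ix_jNonAdjacent}, to get $c(x_i)\ne c(x_j)$, and hence $c(V_i)=5-c(x_i)\ne 5-c(x_j)=c(V_j)$; you simply spell out the step the paper leaves implicit.
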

\begin{proof}
	As~$x_i$ and~$x_j$ are adjacent,~$c(x_i)\ne c(x_j)$. So~$c(V_i)\ne c(V_j)$.
\end{proof}

Suppose~$c(V_1) = c(V_2) = 2$.
By the contrapositive of \Cref{lem:NonAdjDiffColorClass},~$V_1,V_2$ are adjacent.
Suppose also that both parts contain a non-Type-2 vertex~$u_1\in V_1, u_2\in V_2$ where~$u_1u_2\in E$.
By \Cref{lem:TypeTriangles},~$u_1$ and~$u_2$ lie in a~$C_3$. 
Since~$c(V_1) = c(V_2)$, the third point of the~$C_3$, say~$u_3$, must be in a part of color class~$3$.
We say that~$u_1u_2u_3$ is a~\emph{$(V_1,2)$-triangle}.
Formally, we call a~$C_3$ $xyz$ a \emph{$(V_i,2)$-triangle} if~$x\in V_i$ where~$c(V_i) = 2$, $y\in V_j$ where~$c(V_j) = 2$ and~$V_i\ne V_j$, and~$z\in V_k$ where~$c(V_k) = 3$.
In other words, a~$(V_i,2)$-triangle contains a vertex from~$V_i$, a vertex from the same color class as~$V_i$, and a vertex from a different color class from~$V_i$.

For example, consider \Cref{fig:graph_II}.
Suppose~$c(V_1) = c(V_3) = 2$ and~$c(V_2)=c(V_4) = 3$ (it is routine to check that this would not be extendable to a proper $3$-coloring).
Then~$a_1c_2d_2$ and~$a_2c_1d_1$ are two~$(V_1,2)$-triangles.
It is worth noting that they are also both~$(V_3,2)$-triangles.


\begin{lem}\label{lem:(V_1-2)TrianglesDisjoint}
	Suppose~$V_1$ contains a Type-$1$ vertex or a Type-$3$ vertex, and suppose~$c(V_1) = 2$.
	Consider two~$(V_1,2)$-triangles~$abc, xyz$ where~$a,x\in V_1$.
    The~$C_3$'s must be vertex disjoint except possibly at the vertex in~$V_1$, i.e.,~$\{b,c\}\cap\{y,z\}=\emptyset$.
\end{lem}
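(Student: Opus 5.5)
The plan is a direct case analysis: every possible shared vertex will be shown to create a $C_4$, which $G$ does not contain. No counting is needed, only the part memberships fixed by the definition of a $(V_1,2)$-triangle and the structure of the canonical partition. The hypothesis that $V_1$ contains a Type-$1$ or Type-$3$ vertex only serves, via \Cref{lem:TypeTriangles}, to make such triangles exist, so I do not expect to use it beyond that.

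First I would fix the part memberships. By definition, $b\in V_j$ with $c(V_j)=2$ and $j\neq 1$, and $c\in V_k$ with $c(V_k)=3$. Likewise $y\in V_{j'}$ with $c(V_{j'})=2$ and $j'\neq 1$, and $z\in V_{k'}$ with $c(V_{k'})=3$. The parts are disjoint and each part has a single color class. So $b$ and $y$ lie in parts of color class $2$, and $c$ and $z$ lie in parts of color class $3$. Hence $b\neq z$ and $c\neq y$, and any overlap of $\{b,c\}$ with $\{y,z\}$ must be $b=y$ or $c=z$. Neither $b$ nor $c$ lies in $V_1$: for $b$ because $j\neq 1$, and for $c$ because its color class differs from that of $V_1$. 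Let $s$ denote the shared vertex, so $s\notin V_1$.

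\emph{Case $a\neq x$.} Both $a$ and $x$ lie in $V_1$, so both are adjacent to $x_1$. Both are also adjacent to $s$. Since $s\in N_2\setminus V_1$, we have $s\neq x_1$, and $s$ is not adjacent to $x_1$ by \Cref{lem:N2SingleEdge}. Then $a\,x_1\,x\,s$ is a $C_4$, a contradiction. (This is the same argument as in the proof of \Cref{lem:x_ix_jNonAdjacent}: two vertices of $V_i$ have no common neighbour other than $x_i$.)

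\emph{Case $a=x$.} If the two triangles coincide there is nothing to prove, so assume they are distinct. Suppose $b=y$; then $c\neq z$. Now $c$ and $z$ are both common neighbours of the adjacent pair $a,b$, so $a\,c\,b\,z$ is a $C_4$ in $G$, a contradiction. The case $c=z$ is symmetric: $b$ and $y$ are distinct common neighbours of $a$ and $c$, giving the $C_4$ $a\,b\,c\,y$. Equivalently, each of $a,b$ lies in $N_1$ of the other, so the edge $ab$ lies in at most one triangle by \Cref{lem:Irreducible}.

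The only delicate point I anticipate is the bookkeeping in the first paragraph. One must check that $b$ can never coincide with $z$, nor $c$ with $y$; this follows from disjointness of parts together with the color-class labels. One must also check that the shared vertex is distinct from $x_1$, which holds because it lies in $N_2$. Once these are recorded, both cases are immediate consequences of $C_4$-freeness.
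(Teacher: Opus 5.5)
Your proof is correct and follows the paper's route. The case $a\neq x$ gives the $C_4$ through $x_1$, and the case $a=x$ gives a $C_4$ from two distinct common neighbours of a shared edge; your color-class bookkeeping ($b\neq z$, $c\neq y$, $s\neq x_1$) just makes explicit what the paper compresses into ``without loss of generality $b=y$''. One wording fix: if the two triangles coincide, the conclusion $\{b,c\}\cap\{y,z\}=\emptyset$ is false, so instead of ``nothing to prove'' you should say the lemma is implicitly about distinct triangles, which is exactly the assumption $c\neq z$ (resp.\ $b\neq y$) your second case uses.
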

\begin{proof}
	Otherwise, we need to consider two cases.
    Assume without loss of generality that~$b=y$ if $\{b,c\}\cap\{y,z\}\ne\emptyset$.
    The two cases are $a\ne x, b=y$ and $a=x,b=y$.
    In the first case,~$b$ has two neighbours~$a,x$ in~$V_1$, which means~$x_1 a b x$ forms a~$C_4$. 
    In the second case,~$c,z$ are both neighbours of~$a$ and~$b$. So~$azbc$ forms a~$C_4$.
    In both cases we obtain a~$C_4$, which gives the required contradiction.
\end{proof}

\Cref{lem:(V_1-2)TrianglesDisjoint} ensures that vertices of color~$3$ in~$(V_1,2)$-triangles are distinct.

\begin{lem}\label{lem:Average}
	Suppose~$c(V_1) = 2$.
	Suppose there at least~$t$-many $(V_1,2)$-triangles.
	Let~$d$ denote the number of parts of color class~$3$.
	Then there exists a part of color class~$3$, which contains at least~$t/d$ vertices which must be given the color~$3$.
\end{lem}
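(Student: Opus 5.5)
The plan is a pigeonhole argument over the color-$3$ vertices of the $(V_1,2)$-triangles. Fix $t$ distinct $(V_1,2)$-triangles. By definition, each has exactly one vertex $z$ lying in a part of color class~$3$; call it the \emph{third vertex} of the triangle.

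First I will show that this third vertex must receive color~$3$. The other two vertices $x\in V_1$ and $y\in V_j$ lie in parts of color class~$2$, so both are colored from $\{1,2\}$. Since $xy\in E$, they receive distinct colors, and hence use both colors $1$ and $2$. The third vertex $z$ is adjacent to both $x$ and $y$, so $c(z)=3$. This is consistent with the list $\{1,3\}$ available to parts of color class~$3$.

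Next I need the third vertices of the $t$ triangles to be pairwise distinct. This is what \Cref{lem:(V_1-2)TrianglesDisjoint} provides. The existence of a $(V_1,2)$-triangle already forces $V_1$ to contain a non-Type-$2$ vertex, since by \Cref{lem:TypeTriangles} a Type-$2$ vertex lies in no $C_3$ of $G[N_2]$. So the hypothesis of that lemma is satisfied. The lemma says two $(V_1,2)$-triangles meet at most in their $V_1$-vertex, so two distinct triangles cannot share their color-$3$ vertex. This gives at least $t$ distinct vertices, all forced to color~$3$, all lying in parts of color class~$3$.

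Finally, these $t$ vertices are distributed among the $d$ parts of color class~$3$. By the pigeonhole principle, some such part contains at least $\lceil t/d\rceil\ge t/d$ of them. Note that $d\ge1$ as soon as $t\ge1$; the case $t=0$ is trivial.

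The only delicate point is the distinctness step. I need to check that distinct triangles really do differ outside $V_1$, so that \Cref{lem:(V_1-2)TrianglesDisjoint} applies. If two triangles shared their third vertex, then the lemma forces $\{b,c\}\cap\{y,z\}=\emptyset$ whenever they are not identical, which would be a contradiction. So distinct triangles have distinct third vertices, and the count goes through.
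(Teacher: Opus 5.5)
Your proof is correct and follows the paper's argument: the paper also obtains the lemma directly from \Cref{lem:(V_1-2)TrianglesDisjoint}, using that the color-$3$ vertices of distinct $(V_1,2)$-triangles are distinct, and then averages over the $d$ parts of color class~$3$. Your additional checks only make explicit what the paper leaves implicit: that the third vertex is forced to color~$3$, and that the hypothesis of \Cref{lem:(V_1-2)TrianglesDisjoint} holds because the $V_1$-vertex of such a triangle cannot be Type-$2$.
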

\begin{proof}
	This follows immediately from~\Cref{lem:(V_1-2)TrianglesDisjoint}, as vertices with color~$3$ in~$(V_1,2)$-triangles must be distinct.
\end{proof}

\begin{lem}\label{lem:3PartsSameButNoColor}
	Suppose there are at least~$3$ parts~$V_i,V_j,V_k$ of the same color class.
	If there exists a proper $3$-coloring, then each part must contain the same even number~$\ell$ of vertices, and in each of $V_i,V_j,V_k$, exactly $\ell/2$ vertices must be colored~$1$ and exactly $\ell/2$ vertices must be colored~$c(V_i)$.
\end{lem}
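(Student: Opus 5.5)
Three parts of the same color class are pairwise adjacent (by the contrapositive of \Cref{lem:NonAdjDiffColorClass}). By \Cref{lem:x_ix_jNonAdjacent} their edges form perfect matchings, provided the corresponding $x$'s are non-adjacent. I will first check that this holds. Two parts of the same color class have $c(x_i)=c(x_j)$, so $x_ix_j\notin E$. Hence between each pair among $V_i,V_j,V_k$ the edges form a perfect matching. By \Cref{lem:V_iUniformSize} all three parts have the same size $\ell$.

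Write $a=c(V_i)$. Every vertex of $V_i\cup V_j\cup V_k$ gets a color in $\{1,a\}$, since all these vertices avoid $c(x_\cdot)=5-a$. Consider the bipartite graph $G[V_i\cup V_j]$ restricted to the matching edges $M_{ij}$. For each matching edge $uv$ with $u\in V_i$, $v\in V_j$, the endpoints must receive different colors from $\{1,a\}$, so exactly one of them is colored $1$. Thus the map $u\mapsto$ its matched partner swaps color $1$ and color $a$ between $V_i$ and $V_j$. Let $p_i$ denote the number of vertices colored $1$ in $V_i$. Then the vertices colored $1$ in $V_j$ are exactly the partners of the vertices colored $a$ in $V_i$, giving $p_j=\ell-p_i$. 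Similarly $p_k=\ell-p_j$ and $p_i=\ell-p_k$.

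Combining the three equations gives $p_i=\ell-p_k=\ell-(\ell-p_j)=p_j=\ell-p_i$, so $2p_i=\ell$. Hence $\ell$ is even and $p_i=\ell/2$, and likewise $p_j=p_k=\ell/2$. So each part has exactly $\ell/2$ vertices colored $1$ and $\ell/2$ colored $a$. This is the ``odd cycle of complementations'' argument: the three matchings form an odd cycle of parts, forcing a half-half split.

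The main point to verify is that every edge between the parts is one of the matching edges, and that \Cref{lem:x_ix_jNonAdjacent} applies to all three pairs. Both follow from equal color classes forcing $x_i,x_j,x_k$ to be pairwise non-adjacent. Beyond this, the argument only uses that each matching edge forces complementary colors, and it needs nothing about types or triangles.
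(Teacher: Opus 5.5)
Your proof is correct and matches the paper's argument: the perfect matchings between each pair of the three parts force complementary counts of color~$1$, and going around the odd cycle $V_i,V_j,V_k$ gives $\alpha=\ell-\alpha$. Your observation that equal color classes give $c(x_i)=c(x_j)$, hence $x_ix_j\notin E$, meets the hypothesis of \Cref{lem:x_ix_jNonAdjacent} more directly than the paper's appeal to pairwise adjacency of the parts.
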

\begin{proof}
	By the contrapositive of~\Cref{lem:NonAdjDiffColorClass},~$V_i,V_j,V_k$ are pairwise adjacent.
	By \Cref{lem:x_ix_jNonAdjacent}, there is a perfect matching on the vertices in any pair of parts~$V_i,V_j,V_k$.
	Suppose without loss of generality that~$c(V_i) = 2$.
	Suppose~$\alpha$ vertices of~$V_i$ are colored~$1$ and~$\ell - \alpha$ vertices are colored~$2$.
	Then~$\ell - \alpha$ vertices of~$V_j$ are colored~$1$ and~$\alpha$ are colored~$2$ (by the matching on~$V_i$ and~$V_j$), and similarly~$\ell - \alpha$ vertices of~$V_k$ are colored~$1$ and~$\alpha$ are colored~$2$ (by the matching on~$V_i$ and~$V_k$).
	By comparing the perfect matching on~$V_j$ and~$V_k$, we must have that~$\ell - \alpha$ vertices of~$V_k$ are colored~$2$.
	This means that~$\ell - \alpha = \alpha$, which gives~$\alpha = \ell/2$.
\end{proof}

\begin{cor}\label{cor:Deltage4Even}
	Suppose~$\Delta\ge4$. 
    If~$|V_i|=\Delta-2$ and if~$\Delta$ is odd,~$G$ is not $3$-colorable. 
\end{cor}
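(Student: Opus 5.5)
Assume for contradiction that $G$ admits a proper $3$-coloring $c$, normalised as in this section so that $c(v_0)=1$. Every part $V_i$ then has a color class $c(V_i)\in\{2,3\}$. There are $\Delta$ parts and only two color classes, so by pigeonhole some class contains at least $\lceil \Delta/2\rceil$ parts. The plan is to obtain three parts of the same color class and then apply \Cref{lem:3PartsSameButNoColor}.

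This needs $\lceil \Delta/2\rceil\ge 3$, that is, $\Delta\ge 5$. Since $\Delta\ge4$ and $\Delta$ is odd, this holds. With three parts $V_i,V_j,V_k$ of the same color class, \Cref{lem:3PartsSameButNoColor} says that the common part size $\ell$ must be even. By hypothesis (and \Cref{lem:V_iUniformSize}), $\ell=|V_i|=\Delta-2$, which is odd when $\Delta$ is odd. This contradicts the lemma, so no proper $3$-coloring exists.

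There is no real obstacle here. The only point to check is that the pigeonhole bound really yields three parts in one class. Note that $\Delta=3$ is excluded by the hypothesis $\Delta\ge4$, and for $\Delta=3$ the conclusion indeed fails: the Petersen graph is a $3$-colorable example. Beyond that, we only need \Cref{lem:3PartsSameButNoColor} to apply to any three parts of equal color class. It does, because parts in the same class are pairwise adjacent by the contrapositive of \Cref{lem:NonAdjDiffColorClass}, which is exactly what the lemma's proof uses.
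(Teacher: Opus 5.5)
Your proof is correct and is essentially the paper's argument: pigeonhole gives three parts in one color class once $\Delta\ge5$, and \Cref{lem:3PartsSameButNoColor} then forces $|V_i|=\Delta-2$ to be even. One correction to your aside: the Petersen graph has $|V_i|=2=\Delta-1$, so it does not satisfy the hypothesis at $\Delta=3$; the $3$-colorable example with $|V_i|=\Delta-2=1$ is the second graph of \Cref{fig:PotentialYESinstances}, a triangle joined to a center vertex by three subdivided spokes.
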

\begin{proof}
    Suppose there is a proper $3$-coloring.
	If~$\Delta\ge 5$, then there are at least three parts of the same color class.
	If~$|V_i|= \Delta-2$,
    then~$\Delta$ must be even by \Cref{lem:3PartsSameButNoColor}.
\end{proof}

\section{Conditioning on vertices of certain Types}\label{sec:ColorTypes}
In this section, we exhaustively check the 3-colorability of all possible graph configurations for~$\Delta\ge 17$.
We do this by showing non-$3$-colorability when~$N_2$ contains only Type-$1$ vertices (\Cref{lem:T1Delta-2} and \Cref{lem:T1Delta-1}), only Type-$3$ vertices~(\Cref{lem:ColorT3}), only Type-$1$ and Type-$2$ vertices (\Cref{lem:ColorT1andT2}), only Type-$1$ and Type-$3$ vertices (\Cref{lem:ColorT1T3}), and vertices of all three types (\Cref{lem:ColorT1T2T3}). 
We consider the cases based on proof similarity.
For all cases apart from \Cref{lem:ColorT1T2T3}, we show that the claims hold for~$\Delta\ge 4$.

We begin with the following lemma, which will be used in the proofs of \Cref{lem:T1Delta-2,lem:ColorT1andT2,lem:ColorT1T3}.

\begin{lem}\label{lem:T1TrianglesParity}
    Suppose~$\Delta\ge 4$, and
    let~$v\in N_2$ be a Type-$1$ vertex.
    Then the number of edges incident to~$v$ which are contained in a~$C_3$ is even. 
\end{lem}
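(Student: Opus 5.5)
The plan is to use only \Cref{lem:Irreducible} applied to $v$ itself. An edge $vu$ lies in a $C_3$ exactly when $u$ has a neighbour inside $N_1(v)$. The graph $G[N_1(v)]$ has maximum degree at most~$1$, so its edges form a matching $M$. I will pair up the triangle edges at $v$ using $M$.

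\begin{enumerate}
\item Let $v \in V_i$ be of Type-$1$. I will show that the edge $vx_i$ lies in no $C_3$. A triangle on $vx_i$ needs a common neighbour $w$ of $v$ and $x_i$. The neighbours of $x_i$ are $v_0$, the vertices of $V_i$, and any neighbours of $x_i$ in $N_1$; since $v$ is of Type-$1$, $x_i$ has none of the last kind. The vertex $v$ is not adjacent to $v_0$, since $v \in N_2$. It is not adjacent to any vertex of $V_i$, by the definition of Type-$1$. Hence no such $w$ exists.
\item Every other neighbour $u$ of $v$ lies in $N_2$. By \Cref{lem:N2SingleEdge}, $v$ has no neighbour in $N_1$ besides $x_i$, and $v$ is not adjacent to $v_0$.
\item The edge $vu$ is in a $C_3$ if and only if $u$ is covered by an edge of $M$. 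By \Cref{lem:Irreducible}, each such $u$ has exactly one neighbour $u'$ in $N_1(v)$, and then $vu'$ is also a triangle edge.
\item Hence the triangle edges at $v$ are exactly the edges $vu$ with $u$ covered by $M$. Their number is $2|M|$, which is even.
\end{enumerate}

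I do not expect a genuine obstacle; the argument is essentially a direct corollary of \Cref{lem:Irreducible}. The one point needing care is excluding the edge $vx_i$ in the first step. This is the only place the Type-$1$ hypothesis is used: for a Type-$3$ vertex, $vx_i$ lies in the triangle formed with the neighbour of $v$ in $V_i$.

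The hypothesis $\Delta \ge 4$ is not needed beyond the standing assumption $\Delta \ge 3$. I would simply note this rather than use it.
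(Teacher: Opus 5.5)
Your proof is correct and is essentially the paper's argument. The paper says an odd count would force two triangles at $v$ to share an edge and hence create a $C_4$; this is exactly your observation that $G[N_1(v)]$ is a matching, so the triangle edges at $v$ pair up and number $2|M|$. One minor point: your Step~1 is superfluous, because the equivalence in Step~3 holds for every neighbour of $v$, including $x_i$, so the parity holds for any vertex of a $C_4$-free graph. In particular the Type-$1$ hypothesis is not needed (for a Type-$3$ vertex, $vx_i$ and $vw$ are simply matched together), and neither is $\Delta\ge 4$.
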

\begin{proof}
    If the number in the claim is odd, then at least two of the~$C_3$'s must share an edge, which would result in a~$C_4$.
    This is a contradiction.
\end{proof}

\begin{lem}\label{lem:T1Delta-2}
	Suppose~$\Delta\ge 4$.
	Suppose~$N_2$ contains vertices only of Type-$1$, and suppose~$|V_1| = \Delta-2$.
    If such a graph exists, then it is not $3$-colorable. 
\end{lem}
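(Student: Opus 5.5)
The plan is to play the parity of triangles at a single Type-$1$ vertex against \Cref{cor:Deltage4Even}. Parity will force $\Delta$ to be odd, and the corollary then rules out a $3$-coloring.

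First I will pin down the local structure. Since every vertex of $N_2$ is Type-$1$, every $x_i$ has no neighbour in $N_1$, so $N_1$ is an independent set. By \Cref{lem:x_ix_jNonAdjacent} the parts are then pairwise adjacent, with a perfect matching between any two of them. Fix any $v\in V_1$. By \Cref{lem:T1Delta}, $\deg(v)=\Delta$. Of these edges, one goes to $x_1$. The remaining $\Delta-1$ edges go into $N_2$, and none lies inside $V_1$ by the definition of Type-$1$. So $v$ has exactly one neighbour in each part $V_j$ with $j\neq 1$, and hence $\Delta-1$ edges in $G[N_2]$, each joining different parts.

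Next I will use the triangles. The hypothesis $|V_i|=\Delta-2$ for all $i$ holds by \Cref{lem:V_iUniformSize}. So \Cref{lem:TypeTriangles} (Case~1, both endpoints Type-$1$) applies, and every edge $vu$ with $u\in N_2$ in a different part lies in a $C_3$ of $G[N_2]$. Thus the number of edges at $v$ contained in a $C_3$ is exactly $\Delta-1$; the edge $vx_1$ is in no triangle, since $x_1$ has no neighbour in $N_1$ and its $N_2$-neighbours in $V_1$ are not adjacent to $v$. By \Cref{lem:T1TrianglesParity}, $\Delta-1$ is even, so $\Delta$ is odd.

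Finally, since $\Delta\ge 4$ and $\Delta$ is odd, we have $\Delta\ge 5$. As $|V_i|=\Delta-2$, \Cref{cor:Deltage4Even} says $G$ is not $3$-colorable. The only step needing care is confirming that \emph{every} edge at $v$ inside $N_2$ lies in a triangle, so that the triangle-edge count is exactly $\Delta-1$. This rests entirely on Case~1 of \Cref{lem:TypeTriangles}, so I expect no real obstacle. If a constructive view is wanted, the pairing of the $\Delta-1$ neighbours into edge-disjoint triangles (no two triangles at $v$ share an edge, by $C_4$-freeness) gives the same parity directly.
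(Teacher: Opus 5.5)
Your proof is correct and is essentially the paper's argument. The paper case-splits on the parity of $\Delta$: odd $\Delta$ is handled by \Cref{cor:Deltage4Even}, and even $\Delta$ is ruled out via \Cref{lem:TypeTriangles} and \Cref{lem:T1TrianglesParity}; you instead use the parity lemma to force $\Delta$ odd and then invoke the corollary, which is the same logic rearranged (your explicit check that $vx_1$ lies in no triangle, needed for the triangle-edge count at $v$ to be exactly $\Delta-1$, is a detail the paper leaves implicit).
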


\begin{proof}
    If~$\Delta$ is odd, then by~\Cref{cor:Deltage4Even}, the graph would not be $3$-colorable.
    So we assume~$\Delta$ is even.
    Let~$v\in V_1$.
    By \Cref{lem:TypeTriangles}, all incident edges of~$v$ in~$G[N_2]$ is in a~$C_3$.
    But there are exactly~$\deg_{G[N_2]}(v) = \Delta-1$ many such edges, which is an odd number of edges.
    This contradicts \Cref{lem:T1TrianglesParity}, and thus such graphs do not exist.
\end{proof}


\begin{lem}\label{lem:ColorT1andT2}
    Suppose~$\Delta\ge 4$.
    Suppose~$N_2$ contains vertices only of Type-$1$ and Type-$2$, and it contains at least one vertex of each type.
    If such a graph exists, then it is not~$3$-colorable.
\end{lem}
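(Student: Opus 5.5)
The plan is to reuse the parity argument of \Cref{lem:T1Delta-2}, this time at a Type-$1$ vertex whose neighbours in $G[N_2]$ are of mixed Types. The Type-$2$ vertices will force $\Delta$ to be odd, and \Cref{cor:Deltage4Even} then finishes the proof.

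\emph{Structural setup.} Since $N_2$ contains a Type-$2$ vertex $u\in V_i$, the vertex $x_i$ has a neighbour $x_j$ in $N_1$. So $V_i$ and $V_j$ are non-adjacent parts, and \Cref{lem:adj=Delta-2} gives $|V_k|=\Delta-2$ for all $k$. This means \Cref{lem:TypeTriangles} applies.

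We split the indices into \emph{Type-$1$ parts}, whose $x_k$ has no neighbour in $N_1$, and the remaining parts. Because $N_2$ has no Type-$3$ vertices, every part of the second kind consists entirely of Type-$2$ vertices. By \Cref{lem:Irreducible}, $G[N_1]$ is a matching, so these Type-$2$ parts come in pairs $\{V_k,V_{k'}\}$ with $x_kx_{k'}\in E$. Hence there are $2m$ Type-$2$ parts for some $m\ge 1$, and $\Delta-2m\ge 1$ Type-$1$ parts, the latter because a Type-$1$ vertex exists.

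\emph{Counting at a Type-$1$ vertex.} Fix a Type-$1$ vertex $v\in V_1$. As in the proof of \Cref{lem:T1Delta}, $v$ covers each $x_k$ with $k\neq 1$ through a distinct proxy in $V_k$, and $\deg(v)=\Delta$. Therefore $v$ has exactly one neighbour in each of the other $\Delta-1$ parts, and no other neighbours in $N_2$.

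By \Cref{lem:TypeTriangles}, an edge from $v$ to a neighbour lies in a $C_3$ exactly when that neighbour is not Type-$2$. So the number of edges at $v$ lying in a $C_3$ equals the number of other Type-$1$ parts, which is $\Delta-1-2m$. \Cref{lem:T1TrianglesParity} says this number is even, so $\Delta$ is odd.

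\emph{Conclusion.} Since $\Delta\ge 4$ and $\Delta$ is odd, we have $\Delta\ge5$. With $|V_k|=\Delta-2$, \Cref{cor:Deltage4Even} gives that $G$ is not $3$-colorable.

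The main point needing care is the exact-one-neighbour-per-part claim for a Type-$1$ vertex. Without it, one could not identify the triangle-edges at $v$ with the Type-$1$ parts. It follows from the degree count in \Cref{lem:T1Delta} together with \Cref{lem:N2SingleEdge}. A second point to check is that pairing the Type-$2$ parts really makes their number even; this uses that each $x_k$ has at most one neighbour in $N_1$.
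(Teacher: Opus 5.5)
Your proof is correct and follows essentially the paper's argument: the triangle edges at a Type-$1$ vertex must be even in number (via \Cref{lem:TypeTriangles} and \Cref{lem:T1TrianglesParity}), which forces $\Delta$ to be odd, and \Cref{cor:Deltage4Even} then excludes 3-colorability. The only difference is that the paper uses \Cref{cor:AtMostOneType2} to get exactly two Type-$2$ parts, hence $\Delta-3$ triangle edges at $v$. You only need the number of Type-$2$ parts to be even, which your pairing via the matching $G[N_1]$ gives directly, and you also state the per-part neighbour count and the $|V_k|=\Delta-2$ setup more explicitly than the paper does.
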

\begin{proof}
    By \Cref{lem:adj=Delta-2}, each part contains~$\Delta-2$ elements.
    By \Cref{cor:Deltage4Even}, the graph is not~$3$-colorable when~$\Delta$ is odd. So we assume~$\Delta$ is even.
    By \Cref{cor:AtMostOneType2}, there are~$\Delta-2$ Type-$1$ parts and~$2$ Type-$2$ parts.
    Relabelling as needed, let~$V_1$ be a Type-$1$ part, and let~$v\in V_1$.
    Observe that in~$G[N_2]$, exactly~$\Delta-3$ edges incident to~$v$ are contained in a~$C_3$, by \Cref{lem:T1Delta,lem:TypeTriangles}.
    Since~$\Delta$ is even, this number is odd.
    This contradicts \Cref{lem:T1TrianglesParity}, and thus such graphs do not exist.
\end{proof}

The proof of the following lemma follows a similar argument used in the proof of \Cref{lem:T1Delta-2}, and is therefore omitted.

\begin{lem}\label{lem:ColorT1T3}
	Suppose~$\Delta\ge 4$.
	Suppose~$N_2$ contains vertices only of Type-$1$ and Type-$3$, and it contains at least one vertex of each type.
	If such a graph~$G$ exists, then it is not $3$-colorable.
\end{lem}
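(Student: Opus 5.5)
The plan is to follow the parity argument of \Cref{lem:T1Delta-2}, after first checking that the hypothesis of \Cref{lem:TypeTriangles} holds here.

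First, I would fix the part sizes. Since $N_2$ contains a Type-$3$ vertex $w\in V_i$, the vertex $x_i$ has a neighbour $x_j$ in $N_1$ by definition of Type-$3$. By \Cref{lem:Irreducible}, $x_i$ and $x_j$ have no other neighbours in $N_1$. Since $\Delta\ge 4$, there is a third index $k$, and then $x_k x_i\notin E$ and $x_k x_j\notin E$. I would then argue that $V_i$ and $V_j$ are non-adjacent. Suppose instead that $V_i$ and $V_j$ were adjacent, say $ab\in E$ with $a\in V_i$ and $b\in V_j$. Then $x_i a b x_j$ would be a $C_4$, which is forbidden. So non-adjacent parts exist, and \Cref{lem:adj=Delta-2} gives $|V_\ell|=\Delta-2$ for all $\ell\in[\Delta]$.

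Next, I would dispose of odd $\Delta$ directly. Since $\Delta\ge 4$, odd $\Delta$ means $\Delta\ge 5$, and \Cref{cor:Deltage4Even} then shows that $G$ is not $3$-colorable. So it remains to treat even $\Delta$.

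For even $\Delta$, I would show the configuration cannot exist at all. Take a Type-$1$ vertex $v$, relabelled so that $v\in V_1$.
\begin{itemize}
\item By \Cref{lem:T1Delta}, $\deg(v)=\Delta$. Removing the edge $vx_1$ gives $\deg_{G[N_2]}(v)=\Delta-1$.
\item By definition of Type-$1$, $v$ has no neighbour in $V_1$, so each of these $\Delta-1$ edges goes to a vertex in a different part.
\item No vertex of $N_2$ is of Type-$2$, by hypothesis. So the $C_3$ criterion of \Cref{lem:TypeTriangles} is satisfied for every such edge, with the part-size hypothesis supplied by the first step.
\end{itemize}
Hence all $\Delta-1$ edges of $v$ in $G[N_2]$ lie in a $C_3$. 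This number is odd, which contradicts \Cref{lem:T1TrianglesParity}. So no such graph exists for even $\Delta$, and the lemma follows in both cases.

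I expect the main obstacle to be the first step, namely justifying $|V_\ell|=\Delta-2$ from the mere presence of a Type-$3$ vertex, since \Cref{lem:TypeTriangles} needs it. The $C_4$ argument above handles it, and it is the same reasoning behind the remark after \Cref{lem:x_ix_jNonAdjacent} that non-adjacent parts correspond to adjacent $x_i$'s. Everything else is a direct repetition of the proof of \Cref{lem:T1Delta-2}.
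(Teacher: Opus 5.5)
Your proof is correct and is the argument the paper intends when it says the proof mirrors that of \Cref{lem:T1Delta-2}: \Cref{cor:Deltage4Even} excludes odd $\Delta$, and for even $\Delta$ a Type-$1$ vertex has $\Delta-1$ (an odd number of) edges in $G[N_2]$ lying in triangles by \Cref{lem:TypeTriangles}, contradicting \Cref{lem:T1TrianglesParity}. You also check explicitly that $x_ix_j\in E$ forces $V_i,V_j$ to be non-adjacent, so that \Cref{lem:adj=Delta-2} gives $|V_\ell|=\Delta-2$; the paper leaves this detail implicit, and your detour through a third index $k$ is not needed.
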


\begin{lem}\label{lem:ColorT3}
    Suppose~$\Delta\ge 4$.
    Suppose~$N_2$ contains vertices only of Type-$3$. Then~$G$ is not $3$-colorable.
\end{lem}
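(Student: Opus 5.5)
The plan is to extract rigid structure from the assumption that every vertex is Type-$3$, and then derive a contradiction by double counting the triangles in $G[N_2]$ according to their colors. Suppose $N_2$ consists only of Type-$3$ vertices and, for contradiction, that a proper $3$-coloring $c$ exists with $c(v_0)=1$.

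\emph{Structure.} Each $x_i$ has a neighbour in $N_1$, and by \Cref{lem:Irreducible} $G[N_1]$ is then a perfect matching, so $\Delta$ is even. By \Cref{lem:adj=Delta-2}, $|V_i|=\Delta-2$ for all $i$. Each part $V_i$ is non-adjacent to exactly one part, namely the part of the $N_1$-partner of $x_i$, and is adjacent to all the other parts. Each $v\in V_i$ has exactly one neighbour in $V_i$, so $G[V_i]$ is a perfect matching. By \Cref{lem:T3Delta}, $\deg_{G[N_2]}(v)=\Delta-1$. These edges split as one internal edge and $\Delta-2$ cross edges, one into each part adjacent to $V_i$ (using \Cref{lem:x_ix_jNonAdjacent}).

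\emph{Triangles.} By \Cref{lem:TypeTriangles} (Case $(3,3)$), every cross edge lies in a $C_3$ of $G[N_2]$. By the remark after that lemma, the internal edge lies in no $C_3$. Two triangles through $v$ cannot share an edge, since that would produce a $C_4$, exactly as in \Cref{lem:T1TrianglesParity}. Hence the $\Delta-2$ cross edges at $v$ are paired into triangles, and every vertex of $N_2$ lies in exactly $(\Delta-2)/2\ge 1$ triangles of $G[N_2]$. Let $T$ be the number of such triangles and $n=|N_2|=\Delta(\Delta-2)$.

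\emph{Coloring counts.} Partner parts have different color classes by \Cref{lem:NonAdjDiffColorClass}, so exactly $\Delta/2$ parts have color class $2$ and $\Delta/2$ have color class $3$. Each internal matching edge of $V_i$ gets colors from $\{1,c(V_i)\}$, hence exactly one endpoint is colored $1$. Therefore exactly half of each part is colored $1$, so $n/2$ vertices of $N_2$ are colored $1$ and $n/4$ are colored $2$. Every triangle uses each of the three colors exactly once. Counting (vertex colored $1$, triangle) incidences gives $T=\frac n2\cdot\frac{\Delta-2}{2}$. Counting (vertex colored $2$, triangle) incidences gives $T=\frac n4\cdot\frac{\Delta-2}{2}$. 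Since $\Delta\ge4$ these values are positive and unequal, a contradiction.

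The main obstacle is the claim that each vertex lies in exactly $(\Delta-2)/2$ triangles. This needs both that every cross edge is in some triangle and that no edge is in two. I would verify this carefully from Case 6 of \Cref{lem:TypeTriangles} and from $C_4$-freeness; the rest is bookkeeping.
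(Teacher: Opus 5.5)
Your proof is correct, but it takes a different route from the paper's.

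\textbf{The paper's route.} The paper fixes alternating color classes and counts the $(\Delta/2-1)(\Delta-2)$ many $(V_2,2)$-triangles. It uses \Cref{lem:(V_1-2)TrianglesDisjoint} to see that their color-$3$ vertices are distinct. It then pigeonholes (\Cref{lem:Average}) over the $\Delta/2-1$ class-$3$ parts adjacent to $V_2$. This produces a part $V_j$ all of whose $\Delta-2$ vertices are colored $3$, which contradicts the matching inside $V_j$.

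\textbf{Your route.} You combine Case 6 of \Cref{lem:TypeTriangles} with $C_4$-freeness to show that every cross edge lies in exactly one triangle. Hence every vertex of $N_2$ lies in exactly $(\Delta-2)/2$ triangles. Every triangle is rainbow, so writing $n_c$ for the number of vertices of $N_2$ colored $c$ gives $T=n_c(\Delta-2)/2$ for each $c$. Therefore $n_1=n_2=n_3$, which is incompatible with $n_1=n/2$ as forced by the internal matchings.

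\textbf{Comparison.} Your argument is global and symmetric. It needs no labelling of color classes and no averaging lemma. In the $n_1=n_2=n_3$ form it does not even need \Cref{lem:NonAdjDiffColorClass}. The paper's local $(V_1,2)$-triangle averaging, in turn, does not require every vertex to lie in the same number of triangles. That is why the paper reuses the same template for the mixed-type case in \Cref{lem:ColorT1T2T3}. There, Type-$2$ vertices lie in no triangle, so your regularity-based double count would not go through as stated.
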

\begin{proof}
    By \Cref{lem:adj=Delta-2}, $|V_i| = \Delta-2$.
	By \Cref{cor:Deltage4Even},~$\Delta$ is even.
    Relabelling as needed, let~$V_i,V_{i+1}$ for~$i=1,3,\ldots, \Delta-1$ be the non-adjacent pairs.
    This means that~$v_0x_ix_{i+1}$ forms a~$C_3$ for every~$i=1,3,\ldots, \Delta-1$.
    Non-adjacent parts belong to different color classes by \Cref{lem:NonAdjDiffColorClass}.
    By relabelling as needed, we assume
    \[c(V_i) = 
    \begin{cases}
    	2, \quad \text{if $i$ is even}\\
    	3, \quad \text{if $i$ is odd}\\
    \end{cases}
    \]
    Because the vertices in~$V_i$ form a perfect matching by definition of Type-$3$ vertices, half the vertices in~$V_i$ must be colored with~$1$ and the other half with~$c(V_i)$.
    
    By \Cref{lem:TypeTriangles} Case 6, all edges in~$G[N_2]$ are contained in a~$C_3$.
	There are exactly~$(\Delta/2 - 1)(\Delta-2)$ many $(V_2,2)$-triangles, each of which contains a unique vertex of color~$3$.
	As~$V_1$ and~$V_2$ are non-adjacent, there are~$\Delta/2 - 1$ parts of color class~$3$ which could contain such vertices.
    Then by \Cref{lem:Average}, there must be at least one part of color class~$3$, say~$V_j$, which has at least~$(\Delta/2 - 1)(\Delta-2) / (\Delta/2 - 1) = \Delta-2$ vertices with color~$3$.
    So all vertices of~$V_j$ must be of color~$3$.
    But this is not possible, since the vertices in~$V_j$ form a perfect matching.
\end{proof}

For the penultimate case, we need the definition of  a \emph{Moore graph}.
All \emph{$r$-regular} graphs (every vertex is of the same degree $d$) of diameter-$d$, have at most~$1+d\sum_{i=0}^{r-1} (d-1)^i$ vertices.
An~$r$-regular graph of diameter-$d$ that has exactly this many vertices is called a \emph{Moore graph}. 
Specifically for our case~$d=2$, a regular graph of diameter-$2$ and girth 5 is a Moore graph.
The degree of a diameter-$2$ Moore graph is known to be~$2,3,7,$ or $57$~\cite{hoffman1960moore}.
For~$2,3,7$, the corresponding diameter-$2$ Moore graphs are unique, and they are the cycle graph on~$5$ vertices, the Petersen graph, and the Hoffman-Singleton graph, respectively.
The existence of a diameter-$2$ Moore graph with degree-$57$ is still not known.

\begin{lem}\label{lem:T1Delta-1}
	Suppose~$\Delta\ge 4$.
	Suppose~$N_2$ contains vertices only of Type-$1$, and suppose~$|V_1| = \Delta-1$.
    Then~$G$ must be isomorphic to a Moore graph and it is not~$3$-colorable.
\end{lem}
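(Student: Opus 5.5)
First I show that $G$ is a Moore graph: $\Delta$-regular, diameter $2$, girth $5$.

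Since $|V_1| = \Delta-1$, the remark after \Cref{lem:adj=Delta-2} says $N_1$ is independent, so every $x_i$ has degree $1 + (\Delta-1) = \Delta$. The root $v_0$ has degree $\Delta$. Every vertex of $N_2$ is Type-$1$, so it has degree $\Delta$ by \Cref{lem:T1Delta}. Thus every vertex has degree $\Delta$, and $G$ is $\Delta$-regular.

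To get girth $5$, $C_4$-freeness already handles $4$-cycles, so it remains to rule out triangles. Count the vertices: $n = 1 + \Delta + \Delta(\Delta-1) = \Delta^2+1$, which is exactly the Moore bound. In a $\Delta$-regular diameter-$2$ graph on $\Delta^2+1$ vertices, the root rooted at any vertex $w$ must have $|N_2(w)| = \Delta(\Delta-1)$. This forces $N_1(w)$ to be independent and every vertex of $N_2(w)$ to have exactly one neighbour in $N_1(w)$. In particular no edge lies inside $N_1(w)$ for any $w$, so $G$ is triangle-free.

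I could also argue directly with \Cref{lem:TypeTriangles}-style counting. A Type-$1$ vertex $u \in V_i$ has $\Delta-1$ neighbours in $N_2$, one in each other part, and must cover the $\Delta-1$ vertices of each $V_j$. Its $\Delta-2$ neighbours outside $V_i \cup V_j$ reach $V_j$ via the perfect matchings of \Cref{lem:x_ix_jNonAdjacent}. Together with its neighbour in $V_j$, these cover exactly $\Delta-1$ vertices with no slack, so no triangle can occur. Hence $G$ is a Moore graph of degree $\Delta$. By Hoffman--Singleton~\cite{hoffman1960moore} and $\Delta \ge 4$, this gives $\Delta \in \{7, 57\}$.

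Next, non-$3$-colorability. Fix a coloring with $c(v_0) = 1$. All $\Delta$ parts are pairwise adjacent, and since $\Delta \ge 7$ at least three parts share a color class. By \Cref{lem:3PartsSameButNoColor}, each part then has even size $\ell = \Delta-1$. Since $\Delta \in \{7, 57\}$, $\Delta-1$ is even, so this is not yet a contradiction.

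I expect this to be the main obstacle, and I would use independence numbers. A $3$-coloring needs an independent set of size at least $n/3 = (\Delta^2+1)/3$. By Hoffman's ratio bound for strongly regular graphs with eigenvalues $\Delta$ and $s = (-1-\sqrt{4\Delta-3})/2$, we have $\alpha \le n(-s)/(\Delta - s)$.
\begin{itemize}
\item For $\Delta = 7$ ($n = 50$, $s = -3$): $\alpha \le 50 \cdot 3/10 = 15 < 50/3$.
\item For $\Delta = 57$ ($n = 3250$, $s = -8$): $\alpha \le 3250 \cdot 8/65 = 400 < 3250/3$.
\end{itemize}
So neither graph is $3$-colorable. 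For the Hoffman--Singleton graph one can alternatively cite its known chromatic number $4$, but the ratio bound covers both cases uniformly.
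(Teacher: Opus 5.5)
Your proposal is correct and follows the same route as the paper: regularity plus triangle-freeness makes $G$ a Moore graph, the Hoffman--Singleton theorem forces $\Delta\in\{7,57\}$, and a spectral bound rules out $3$-colorability. The differences are cosmetic. You derive triangle-freeness from equality in the Moore bound $n=\Delta^2+1$, whereas the paper uses the local covering argument you give as your alternative. You also apply the ratio bound $\alpha\le n(-s)/(\Delta-s)$ uniformly to both degrees, whereas the paper cites $\chi=4$ for the Hoffman--Singleton graph and uses Hoffman's chromatic bound for degree $57$; for regular graphs that bound is equivalent to yours.
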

\begin{proof}
	We show first that the graph~$G$ must be~$C_3$-free.
	Within~$G[N_2]$, each vertex is of degree~$\Delta-1$ by~\Cref{lem:T1Delta}.
	This implies that no vertex can cover another vertex in another part twice, as otherwise it would fail to cover all vertices in the part.
	So~$G[N_2]$ must be $C_3$-free.
	Observe that~$v_0$ and the vertices in~$N_1$ cannot be in a~$C_3$, since no two vertices of~$N_1$ nor two vertices in~$V_i$ are adjacent. 
	It follows that~$G$ must be~$C_3$-free, and thus~$G$ must be~$C_3$-free and~$C_4$-free.
    
    Observe that~$G$ must be regular. 
    Indeed, all vertices in~$N_1$ is of degree~$\Delta$ since each part has~$\Delta-1$ elements.
    Furthermore, all vertices in~$N_2$ are Type-$1$ by \Cref{lem:adj=Delta-2}, and thus of degree~$\Delta$ by \Cref{lem:T1Delta}.
    
    So~$G$ is a regular graph of diameter-$2$ and girth~$5$, and thus~$G$ must be isomorphic to a Moore graph of degree-$7$ or degree-$57$.
    The degree-$7$ variant, the Hoffman-Singleton graph, has chromatic number~$4$ (\cite{brouwer_hs}); using Hoffman's bound (see for example \cite{haemers2021hoffman}), the chromatic number of a Moore graph of degree-$57$ is at least $9$.
\end{proof}

For the final lemma, we require a stronger condition on the max degree~$\Delta$.

\begin{lem}\label{lem:ColorT1T2T3}
    Suppose~$\Delta\ge 17$.
    Suppose~$N_2$ contains vertices of all three types.
    Then~$G$ is not $3$-colorable.
\end{lem}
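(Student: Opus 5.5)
Suppose for contradiction that $G$ has a proper $3$-coloring $c$ with $c(v_0)=1$. I would first pin down the shape of the canonical partition and then run a $(V_i,2)$-triangle counting argument of the kind used in \Cref{lem:ColorT3}.

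\textbf{Setup.} Since $N_2$ contains a Type-$2$ and a Type-$3$ vertex, \Cref{lem:adj=Delta-2} gives $|V_i|=\Delta-2$ for all $i$. By \Cref{cor:Deltage4Even} we may assume $\Delta$ is even. The vertices $x_i$ having a neighbour in $N_1$ come in matched pairs $x_ix_{i'}$, and the corresponding parts $V_i,V_{i'}$ are the non-adjacent pairs. Type-$1$ parts are exactly those whose $x_i$ is isolated in $G[N_1]$. Let $m$ be the number of matched pairs, so there are $2m$ non-Type-$1$ parts and $\Delta-2m\ge 1$ Type-$1$ parts.

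\textbf{Bound on Type-$2$ vertices.} By \Cref{cor:AtMostOneType2}, Type-$2$ vertices form an independent set. A Type-$2$ vertex in $V_i$ is matched to a vertex in every part adjacent to $V_i$, so it blocks Type-$2$ vertices at those partners. I would use parity to bound the number of Type-$2$ vertices. By \Cref{lem:TypeTriangles} and \Cref{lem:T1Delta}, a Type-$1$ vertex $v$ has $\Delta-1-t(v)$ edges in $G[N_2]$ lying in triangles, where $t(v)$ is the number of its Type-$2$ neighbours. By \Cref{lem:T1TrianglesParity} this count is even, so $t(v)$ is odd; in particular every Type-$1$ vertex has at least one Type-$2$ neighbour.

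\textbf{Coloring count.} A pair $V_i,V_{i'}$ receives different color classes by \Cref{lem:NonAdjDiffColorClass}. The Type-$1$ parts are split between classes $2$ and $3$. Since $\Delta\ge17$, at least one color class, say class $2$, contains at least $\Delta/2$ parts, and \Cref{lem:3PartsSameButNoColor} forces exactly $(\Delta-2)/2$ vertices of color $2$ in each of them. Fix a part $V_1$ of class $2$, preferably chosen so that it contains many non-Type-$2$ vertices; such a part exists because the parity argument limits how many Type-$2$ vertices there can be. For each non-Type-$2$ vertex $u\in V_1$ and each non-Type-$2$ neighbour of $u$ in another class-$2$ part, \Cref{lem:TypeTriangles} gives a $(V_1,2)$-triangle. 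This yields roughly $t\gtrsim(\Delta-2)\cdot(d_2-1)-O(\#\text{Type-2})$ triangles, where $d_2$ is the number of class-$2$ parts. By \Cref{lem:(V_1-2)TrianglesDisjoint}, the color-$3$ vertices of these triangles are distinct. Then \Cref{lem:Average} gives a class-$3$ part with at least $t/d_3$ vertices of color $3$. When $d_3\ge3$ this exceeds $(\Delta-2)/2$, contradicting \Cref{lem:3PartsSameButNoColor}; when $d_3\le 2$ it exceeds the $\Delta-2$ vertices available. For a part containing Type-$3$ vertices, the same argument gives a contradiction with its internal perfect matching, which forces at most half of its vertices to have color $3$.

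\textbf{Main obstacle.} The hardest step is making the subtracted Type-$2$ loss term small enough uniformly over all $m$ and all color splits; this is where the constant $17$ should come from. I would first try to show that the total number of Type-$2$ vertices is at most about $2(\Delta-2)$. The parity fact that each Type-$1$ vertex has an odd number of Type-$2$ neighbours should help here, combined with the perfect matchings between adjacent parts, which limit how many Type-$2$ neighbours are possible. After that, I would check the inequality $t/d_3>(\Delta-2)/2$ across the case split on $d_2$ and $d_3$, and verify that it holds once $\Delta\ge17$.
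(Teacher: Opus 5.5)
There is a genuine gap. The step that actually proves the lemma is making the Type-$2$ loss small enough that $t/d_3>(\Delta-2)/2$. You leave it open, and the route you propose for it cannot work.

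Your parity observation is correct ($t(v)$ is odd for every Type-$1$ vertex), but it bounds the number $T$ of Type-$2$ vertices from \emph{below}, not above. Each Type-$2$ vertex has exactly one neighbour in a fixed Type-$1$ part $V_a$, so $T=\sum_{v\in V_a}t(v)\ge\Delta-2$. Hence ``the parity argument limits how many Type-$2$ vertices there can be'' is unsupported. You do not need it to choose $V_1$ anyway: non-adjacent pairs split evenly between the classes, so the larger class contains a Type-$1$ part.

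Worse, your target $T\lesssim 2(\Delta-2)$ is false. Polarity graphs of unitary polarities of $PG(2,q^2)$ are $C_4$-free of diameter $2$ with $\Delta=q^2+1$, and the absolute points are exactly the vertices of degree $\Delta-1$. Rooted at a non-absolute point, they satisfy the lemma's hypotheses and have $q^3-q=q(\Delta-2)$ Type-$2$ vertices in $N_2$. For $q=5$ this gives $\Delta=26$ and $120$ Type-$2$ vertices, against $2(\Delta-2)=48$.

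The facts you actually invoke are: Type-$2$ vertices are independent, adjacent parts are joined by perfect matchings, and $t(v)$ is odd. These only give $k_a+k_b\le\Delta-2$ for the Type-$2$ counts of two adjacent non-Type-$1$ parts. They do not exclude the following configuration: two Type-$1$ parts, one in each class, and every non-Type-$1$ part containing $(\Delta-2)/2$ Type-$2$ vertices (take $\Delta\equiv 2 \pmod 4$). There $d_2=d_3=\Delta/2$, and the best choice of $V_1$ is the Type-$1$ part of class $2$. The other $\Delta/2-1$ class-$2$ parts each contribute $(\Delta-2)/2$ triangles, so $t=(\Delta-2)^2/4$. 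Then $t/d_3=(\Delta-2)^2/(2\Delta)<(\Delta-2)/2$ for every $\Delta$. The averaging step fails, and no threshold such as $17$ rescues it.

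The missing idea is the paper's treatment of this middle regime. Let $k$ be the largest number of Type-$2$ vertices in a part $V_i$. The matchings force at least $k$ non-Type-$2$ vertices in each part adjacent to $V_i$. Maximality forces at least $\Delta-2-k$ non-Type-$2$ vertices in every part. This yields enough $(V_1,2)$-triangles when $k\ge\Delta/2+2$ or $k\le\Delta/2-3$.

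For $k$ near $\Delta/2$, the paper re-roots at $x_i$, which has degree $\Delta$. The $k$ Type-$2$ vertices of $V_i$ are neighbours of $x_i$ with no neighbour in $N_1(x_i)$, so they index $k$ Type-$1$ parts of the new partition. Each such part in the class of $V_1$ contributes $\Delta-2$ triangles. Some argument of this kind, or a genuinely valid upper bound on the Type-$2$ vertices in class-$2$ parts, is needed before your count closes.

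A smaller point: a part's internal matching caps only its \emph{Type-$3$} vertices at half colored $3$; its Type-$2$ vertices are unconstrained.
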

\begin{proof}
By \Cref{lem:adj=Delta-2},~$|V_i| = \Delta-2$.
By \Cref{cor:Deltage4Even},~$\Delta$ is even.
Let~$V_i$ be a part with the most Type-$2$ vertices, containing~$k$ Type-$2$ vertices and~$\Delta-2-k$ Type-$3$ vertices.
By assumption, there exists a part~$V_j$ which is non-adjacent to~$V_i$.
Then~$V_j$ contains at most~$k$ Type-$2$ vertices and at least~$\Delta-2-k$ Type-$3$ vertices.

The number of color classes in the graph is determined by the number of color classes of Type-$1$ parts.
By relabelling as needed, suppose there are at least~$\Delta/2$ parts of color class~$2$, and let~$V_1$ be a Type-$1$ part where~$c(V_1) = 2$.
We can count the number of~$(V_1,2)$-triangles in two ways.
We show by an average argument based on \Cref{lem:Average} that for certain~$k$ values, there must exist a part which contains at least~$\Delta/2$ vertices with color~$3$.
When there are at least 3 parts of color class~$3$, this gives the required non-3-colorability conclusion  by \Cref{lem:3PartsSameButNoColor}; we reach a similar conclusion when there are at most 2 parts of color class~$3$.

We count the number of~$(V_1,2)$-triangles in two ways.
Firstly, since Type-$2$ vertices cannot be adjacent by \Cref{cor:AtMostOneType2}, to accommodate for the~$k$ Type-$2$ vertices in~$V_i$, there must be at least~$k$ Type-$3$ vertices in every non-Type-$1$ part that is not~$V_i$ nor~$V_j$.
By \Cref{lem:TypeTriangles},
\begin{equation}\label{eqn:v12triangles1}
    \text{$\#$ of $(V_1,2)$-triangles}\ge (\Delta-2-k) + k(\Delta/2-2),
\end{equation}
where the first term comes from~$V_i$ or~$V_j$ depending on their color class, and the second term comes from having at least~$k$ Type-$3$ or Type-$1$ vertices in each of the remaining~$\Delta/2-2$ parts of color class~$2$ (the $-2$ term in~$\Delta/2-2$ comes from excluding~$V_1$, and one of $V_i$ or $V_j$).
Note that the Type-$1$ parts actually contribute~$\Delta-2$ many~$(V_1,2)$-triangles, but for our purposes,~$k$ suffices.

Secondly, each non-Type-$1$ part that is not~$V_i$ nor~$V_j$ contains at least~$\Delta-2-k$ Type-$3$ vertices by choice of~$k$. 
Each Type-$1$ part contains~$\Delta-2$ Type-$1$ vertices, and therefore at least~$\Delta-2-k$ Type-$1$ vertices. By \Cref{lem:TypeTriangles},
\begin{equation}\label{eqn:v12triangles2}
    \text{$\#$ of $(V_1,2)$-triangles}\ge (\Delta-2-k)(\Delta/2-1).
\end{equation}
We split into three cases based on the value of~$k$.

\paragraph{Case 1: $k\ge \Delta/2 + 2$.}
By \Cref{lem:Average} and 
\Cref{eqn:v12triangles1}, there is a part of color class~$3$ which contains at least
\[\displaystyle\frac{(\Delta-2-k) + k(\Delta/2-2)}{\Delta/2} \ge \frac{(\Delta/2+2)(\Delta/2-2)}{\Delta/2} = \frac{\Delta}{2} - \frac{8}{\Delta}\]
vertices with color~$3$.
As this quantity is an integer and since~$\Delta\ge 17$, we conclude that there are at least~$\Delta/2$ vertices with color~$3$ in one part.

Since each part contains~$\Delta-2$ elements, we have the required conclusion if there are at least three parts of color class~$3$ by \Cref{lem:3PartsSameButNoColor}.
If there are at most two parts of color class~$3$, then there must be at least (using \Cref{eqn:v12triangles1} again)
\[(\Delta-2-k) + k(\Delta/2-2) \ge (\Delta/2+2)(\Delta/2-2) = \Delta^2/4 -4\]
vertices with color~$3$. But there can be at most~$2\Delta-4$ vertices with color~$3$.
Since~$\Delta\ge 17$, we have~$\Delta^2/4 -4> 2\Delta-4$.
So this case is also not~$3$-colorable.

\paragraph{Case 2: $k\le \Delta/2 -3$.}
By \Cref{lem:Average} and \Cref{eqn:v12triangles2}, there is a part of color class~$3$ which contains at least
\[\displaystyle\frac{(\Delta-2-k)(\Delta/2-1)}{\Delta/2} \ge \frac{(\Delta/2+1)(\Delta/2-1)}{\Delta/2} = \frac{\Delta}{2} - \frac{2}{\Delta}\]
vertices with color~$3$.
As this quantity is an integer and since~$\Delta\ge 17$, we conclude that there are at least~$\Delta/2$ vertices with color~$3$ in one part.
Since each part contains~$\Delta-2$ elements, we have the required conclusion if there are at least three parts of color class~$3$ by \Cref{lem:3PartsSameButNoColor}.
If there are at most two parts of color class~$3$, then there must be at least (using \Cref{eqn:v12triangles2} again)
\[(\Delta-2-k)(\Delta/2-1) \ge (\Delta/2+1)(\Delta/2-1) = \Delta^2/4 -1\]
vertices with color~$3$. But there can be at most~$2\Delta-4$ vertices with color~$3$.
Since~$\Delta\ge 17$, we have~$\Delta^2/4 -1> 2\Delta-4$.
So this case is also not~$3$-colorable.

\paragraph{Case 3: $\Delta/2-2 \le k \le \Delta/2+1$.}
Noting that~$\deg(x_i) = \Delta$, we consider the canonical partition of~$N_2$ rooted at~$x_i$.
$V_i$ contains~$k$ Type-$2$ vertices; each of these vertices has a Type-$1$ part corresponding to it in the new partition.
In this new partition, we shall reuse the notation~$V_1,V_2,\ldots, V_\Delta$. 
By relabelling as needed, suppose there are at least~$\Delta/2$ types of color class~$2$, and let~$V_1$ be a Type-$1$ part where~$c(V_1) = 2$.

We count the number of~$(V_1,2)$-triangles.
There are exactly~$k$ Type-$1$ parts, which means that at least $k/2$ of these parts are of color class~$2$. 
From these parts, we pick up a contribution of~$(k/2-1)(\Delta-2)$.
The $-1$ term in the~$k/2-1$ expression comes from excluding~$V_1$.

On the other hand, there are exactly~$\Delta-k$ non-Type-$1$ parts.
Let~$V_k$ be a non-Type-$1$ part with the most number~$j$ of Type-$2$ vertices.
Let~$V_\ell$ be its non-adjacent part.
If~$j\le\Delta/2$, then all non-Type-$1$ parts contain at least~$\Delta/2$ Type-$3$ vertices by maximality of~$j$.
On the other hand, if~$j\ge \Delta/2$, then all non-Type-$1$ parts which are adjacent to~$V_k$ must have at least~$j$ Type-$3$ vertices by \Cref{cor:AtMostOneType2}.
This means that on average, each non-Type-$1$ part of color class~$2$ has at least~$\Delta/2-1$ Type-$3$ vertices. 
Indeed, one can match the~$\Delta-2-j$ Type-$3$ vertices of~$V_k$ (or of~$V_\ell)$ with the~$j$ Type-$3$ vertices of an adjacent part of color class~$2$.
Such a match exists since~$\Delta-k\ge \Delta/2-1$, which means there are at least 8 non-Type-$1$ parts as~$\Delta\ge 17$.
To conclude, we obtain a contribution of at least~$((\Delta-k)/2)(\Delta/2-1)$ many~$(V_1,2)$-triangles.

By \Cref{lem:Average}, there is a part of color class~$3$ with at least
\[\frac{(k/2-1)(\Delta-2) + ((\Delta-k)/2)(\Delta/2-1)}{\Delta/2}\]
vertices with color~$3$.
It is routine to show that this number is at least~$\Delta/2$ for $\Delta\ge 17,15,13,11$, respectively, when~$k=\Delta/2-2, \Delta/2-1, \Delta/2, \Delta/2 + 1$.
Since~$\Delta\ge 17$, this means that the graphs in this case are not~$3$-colorable.
\end{proof}

We summarize the results obtained in this section.

\begin{thm}\label{thm:Delta17}
    Let~$G$ be a~$C_4$-free graph of diameter-$2$ with no universal vertex, where~$\Delta(G)\ge17$.
    Then~$G$ is not~$3$-colorable.
\end{thm}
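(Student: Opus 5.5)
This is a summary theorem, so the plan is to assemble the case analysis of \Cref{sec:ColorTypes}. Let $G$ be $C_4$-free of diameter-$2$ with no universal vertex and $\Delta=\Delta(G)\ge 17$, and fix a root $v_0$ of degree $\Delta$. Since $v_0$ is not universal, $N_2=N_2(v_0)\neq\emptyset$, so the structure of \Cref{sec:Characterization} applies. We get the canonical partition $V_1,\dots,V_\Delta$, balanced by \Cref{lem:V_iUniformSize}, with $|V_i|\in\{\Delta-2,\Delta-1\}$ by \Cref{lem:V_iSizeLowerBound}. By \Cref{lem:1of3Types}, every vertex of $N_2$ is of Type-$1$, $2$ or $3$.

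The proof is then a case split on the set $S\subseteq\{1,2,3\}$ of Types occurring in $N_2$, with $S\neq\emptyset$.
\begin{itemize}
\item If $S=\{1\}$, split on the part size. If $|V_i|=\Delta-1$, apply \Cref{lem:T1Delta-1} (Moore graph). If $|V_i|=\Delta-2$, apply \Cref{lem:T1Delta-2}.
\item If $S=\{3\}$, apply \Cref{lem:ColorT3}.
\item If $S=\{1,2\}$, apply \Cref{lem:ColorT1andT2}.
\item If $S=\{1,3\}$, apply \Cref{lem:ColorT1T3}.
\item If $S=\{1,2,3\}$, apply \Cref{lem:ColorT1T2T3}. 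This is the only place where $\Delta\ge 17$ is genuinely needed; every other lemma requires only $\Delta\ge4$.
\end{itemize}

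Two cases remain, $S=\{2\}$ and $S=\{2,3\}$, i.e.\ $N_2$ contains Type-$2$ vertices but no Type-$1$ vertex. Both are handled by rerooting, which is exactly what \Cref{lem:RootShift} provides. Take a Type-$2$ vertex $v\in V_i$. Then $x_i$ has degree $\Delta$, so $x_i$ is a valid root, and with respect to $x_i$ the canonical partition contains a Type-$1$ part. Rerooting does not change $G$, so it suffices to reapply the case analysis to the new root $w_0=x_i$. The new root is again not universal, since $G$ has no universal vertex. Its $N_2$ now contains a Type-$1$ vertex, so we land in one of the cases $S\ni 1$ already covered.

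The case $S=\{2\}$ can also be excluded directly. A Type-$2$ vertex has neighbours in other parts of $N_2$, by its degree count in \Cref{lem:T2Delta-1}. But two Type-$2$ vertices are never adjacent by \Cref{cor:AtMostOneType2}, so $S=\{2\}$ is impossible once $\Delta\ge4$.

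The main obstacle I expect is bookkeeping rather than new mathematics. First, one must check that the rerooting really terminates in a covered case: the new partition contains a Type-$1$ vertex, hence falls under $S\ni1$. Second, the hypotheses of each cited lemma must match. In particular, \Cref{lem:ColorT1T3} has an omitted proof, and I would verify it via the parity argument. By \Cref{lem:TypeTriangles}, a Type-$1$ vertex in a Type-$1$ part has all $\Delta-1$ of its $N_2$-edges in triangles, because its neighbours are non-Type-$2$. This is an odd number, since $\Delta$ is even by \Cref{cor:Deltage4Even}, contradicting \Cref{lem:T1TrianglesParity}.
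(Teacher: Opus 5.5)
Your proposal is correct and is essentially the paper's argument. You fix a maximum-degree root and use the balanced canonical partition and the Type classification. Every configuration containing a Type-$1$ vertex, or only Type-$3$ vertices, goes to the corresponding lemma of \Cref{sec:ColorTypes}, and the configurations with Type-$2$ but no Type-$1$ vertices are handled by rerooting at $x_i$ via \Cref{lem:RootShift}. Your extras are valid but go beyond what the paper needs: the direct exclusion of the all-Type-$2$ case via \Cref{cor:AtMostOneType2}, and the parity sketch for the omitted proof of \Cref{lem:ColorT1T3}.
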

\begin{proof}
    Let~$v_0$ be a vertex with~$\deg(v_0) = \Delta = \Delta(G)$, and let~$N_2 = N_2(v_0)$.
    Since there is no universal vertex,~$N_2\ne \emptyset$.
    Every vertex in~$N_2$ is of Type-$1$, Type-$2$, or of Type-$3$ by \Cref{lem:1of3Types}.
	By \Cref{lem:RootShift}, if there is either a Type-$1$ or a Type-$2$ vertex, then we may always assume that there is a Type-$1$ part in the canonical partition.
	Furthermore, every part is of the same cardinality (\Cref{lem:V_iUniformSize}) and every part has~$\Delta-2$ or~$\Delta-1$ elements (\Cref{lem:V_iSizeLowerBound}).
	We have exhaustively checked that when each part contains~$\Delta-2$ elements, the graph is not~$3$-colorable (\Cref{lem:T1Delta-2,lem:ColorT1andT2,lem:ColorT3,lem:ColorT1T3,lem:ColorT1T2T3}).
	On the other hand, if each part contains~$\Delta-1$ elements, then~$G$ must contain only Type-$1$ parts (\Cref{lem:adj=Delta-2}).
	In that case,~$G$ must be isomorphic to a Moore graph of $57$, which is not~$3$-colorable (\Cref{lem:T1Delta-1}).
    This proves the claim.
\end{proof}

\section{Proof of \texorpdfstring{\Cref{thm:main}}{Theorem 1.1}}\label{sec:Proof}

We are now ready to prove \Cref{thm:main}.

\LTCDtwfour*
\begin{proof}
    Let~$G$ be a~$C_4$-free graph of diameter-$2$.
    If~$N_2=\emptyset$, then we can solve the instance in polynomial-time by \Cref{lem:N2Empty}.
    So suppose~$N_2\ne \emptyset$.
    By \Cref{thm:Delta17}, all graphs with maximum degree at least~$17$, are not~$3$-colorable. 
    The list $3$-colorability of the remaining finitely many graphs with maximum degree at most~$16$ can be hardcoded into a lookup table. 
	One can determine the maximum degree in polynomial-time, and thus deciding if~$G$ is list~$3$-colorable can be solved in polynomial-time.
\end{proof}

\section{Discussion}\label{sec:Discussion}

We showed that \LTCDCF is polynomial-time solvable.
It turns out that many~$C_4$-free graphs of diameter-$2$ with maximum degree at least~$17$ are not $3$-colorable (\Cref{thm:Delta17}).

The key ingredient was the rigid structure arising from the $C_4$-free and diameter-$2$ conditions.
By anchoring on a vertex~$v_0$ of maximum degree~$\Delta$, one can show that every vertex in the graph must be of degree~$\Delta$ or~$\Delta-1$.
Furthermore, one can partition the vertices which are distance-$2$ away from~$v_0$ into~$\Delta$ parts.
Each part contains the same number of vertices and if they are adjacent, the vertices form a perfect matching.

The $\Delta\ge 17$ condition for non-$3$-colorability arose from ensuring an impossible number of vertices of a certain color in the proof of \Cref{lem:ColorT1T2T3}. 
Interestingly, it was only needed in one part of the proof; the other parts required weaker conditions on~$\Delta$. 
Furthermore, non-$3$-colorability was shown for~$\Delta\ge 4$ in all other results in the same section (\Cref{lem:T1Delta-2,lem:ColorT1andT2,lem:ColorT3,lem:ColorT1T3}).
So a natural question is to ask if we can strengthen the~$\Delta\ge 17$ result to~$\Delta\ge 4$.
An affirmative answer would imply that the only~$C_4$-free graphs of diameter-$2$ with~$N_2\ne \emptyset$ are the three leftmost graphs in \Cref{fig:PotentialYESinstances}.

\begin{prob}
    Let~$G$ be a~$C_4$-free graph of diameter-$2$ with~$\Delta(G)\ge4$. 
    Show that~$G$ is not~$3$-colorable.
\end{prob} 

In another direction, one can ask if the so-called \emph{diamond elimination} of \cite{mertzios2016algorithms} preserves induced~$C_4$-freeness. 

\begin{rrule}[Diamond Elimination]\label{rr:Diamond}
    If the quadruple~$\{u_1,u_2,u_3,u_4\}$ of vertices in~$G$ induces a diamond graph, where~$u_1u_2\notin E$, then merge vertices~$u_1$ and~$u_2$.
\end{rrule}
\noindent This is not immediately clear 
(see \Cref{fig:DiamondElimination}).
An affirmative answer would allow one to strengthen the~`$C_4$-free' condition to the `induced~$C_4$-free' condition.
It should be noted that the graph shown in \Cref{fig:DiamondElimination} is of diameter-$3$.

\begin{figure}
    \centering
    \begin{tikzpicture}[
    scale=0.8,
    vertex/.style={circle, fill=black, inner sep=2pt},
    label distance=-2pt
    ]
    
    \begin{scope}[shift={(0,0)}]
        \node[vertex] (v1) at (2,4) [label=above:$1$] {};
        \node[vertex] (v2) at (1,2.5) [label=left:$2$] {};
        \node[vertex] (v3) at (3,2.5) [label=right:$3$] {};
        \node[vertex] (v4) at (2,1) [label=below:$4$] {};
        \node[vertex] (v5) at (4.5,1) [label=below:$5$] {};
        \node[vertex] (v6) at (5.5,2.5) [label=right:$6$] {};
        \node[vertex] (v7) at (4.5,4) [label=above:$7$] {};
        
        \draw[thick] (v1) -- (v2);
        \draw[thick] (v1) -- (v3);
        \draw[thick] (v1) -- (v7);
        
        \draw[thick] (v2) -- (v3);
        \draw[thick] (v2) -- (v4);
        
        \draw[thick] (v3) -- (v4);
        
        \draw[thick] (v4) -- (v5);
        \draw[thick] (v5) -- (v6);
        \draw[thick] (v6) -- (v7);
    \end{scope}
    
    \draw[-{Stealth[scale=1]}, line width=1.5pt] (6.2, 2.5) -- (7.8, 2.5);
    
    \begin{scope}[shift={(7.5,0)}]
        \node[vertex] (v2) at (1,2.5) [label=above left:$2$] {};
        \node[vertex] (v3) at (3,2.5) [label=above:$3$] {};
        \node[vertex] (v4) at (2,1) [label=below:$4$] {};
        \node[vertex] (v5) at (4.5,1) [label=below:$5$] {};
        \node[vertex] (v6) at (5.5,2.5) [label=right:$6$] {};
        \node[vertex] (v7) at (4.5,2.5) [label=above:$7$] {};
        
        \draw[thick] (v2) -- (v3);
        \draw[thick] (v2) -- (v4);
        \draw[thick] (v3) -- (v4);
        \draw[thick] (v4) -- (v7);
        \draw[thick] (v4) -- (v5);
        \draw[thick] (v7) -- (v6);
        \draw[thick] (v6) -- (v5);
    \end{scope}
    
    \end{tikzpicture}
    \caption{Diamond elimination (\Cref{rr:Diamond}) does not preserve induced~$C_4$-freeness.
    We merge the vertices~$1$ and~$4$ in the left graph to obtain the right graph.
    The right graph contains an induced~$C_4$.
    Observe that the left graph is of diameter-$3$.}
    \label{fig:DiamondElimination}
\end{figure}

\begin{prob}
    Let~$G$ be an induced~$C_4$-free graph of diameter-$2$. Let~$G'$ be the graph obtained by applying \Cref{rr:Diamond} exhaustively to~$G$.
    Is~$G'$ induced~$C_4$-free?
\end{prob}

Finally, the problem remains open when we replace the~$C_4$-free condition by the~$C_3$-free condition, or if we remove the $C_4$-free condition outright.
A natural question is to ask if any of the structural results of \Cref{sec:Characterization} remain true.
The answer is a definitive no. \Cref{lem:N2SingleEdge}, which states that every vertex in~$N_2$ has exactly one neighbour in~$N_1$ already breaks, as the `exactly' becomes `at least'. 
So vertices in~$N_2$ cannot be partitioned based on their adjacencies to~$N_1$, which means that the canonical partition would no longer exist.
A new strategy is most likely needed to tackle these problems.

\section*{Acknowledgements}
The author thanks Carla Groenland, Ananthakrishnan Ravi, and Nikolaas Verhulst for helpful discussions on $3$-coloring graphs of bounded degree, Dion Gijswijt for a helpful discussion on diamond elimination, and Gabriëlle Zwaneveld for suggesting the use of Hoffman's bound. 

\bibliographystyle{plain}
\bibliography{z_bib}

\end{document}